\date{}
\newcommand{\stkout}[1]{\ifmmode\text{\sout{\ensuremath{#1}}}\else\sout{#1}\fi}
\numberwithin{equation}{section}
\newtheorem{thm}{Theorem}[section]
\newtheorem{prop}[thm]{Proposition}
\newtheorem{lemma}[thm]{Lemma}
\newtheorem{defn}[thm]{Definition}
\newtheorem{cor}[thm]{Corollary}
\theoremstyle{definition}
\newtheorem{remark}[thm]{Remark}
\newtheorem{example}[thm]{Example}
\title{Differential Codes on Higher Dimensional Varieties Via Grothendieck's Residue Symbol}
\author{David Grant}
\address{Department of Mathematics, University of Colorado Boulder, Boulder, CO 80309-0395, USA}
\author{John D. Massman, III}
\address{
Virginia Mason Franciscan Health, 1149 Market Street, Tacoma, WA 98402, USA}
\author{S. Srimathy}
\address{School of Mathematics, Tata Institute of Fundamental Research, Mumbai, India}
\keywords{Algebraic geometric codes, Grothendieck residue symbol, differential codes}
\subjclass[2020]{94B27, 14G50}
\begin{document}
\begin{abstract}
    We give a new construction of linear codes over finite fields on higher dimensional varieties using Grothendieck's theory of residues. This generalizes the construction of  differential codes over curves to varieties of higher dimensions.
\end{abstract}
\maketitle


\section{Introduction}\label{sec:intro}

To fix notation, let $k$ be a finite field 
and $X$ be a smooth projective variety\footnote{For us, a variety over $k$ is a geometrically integral separated scheme of finite type over $k$. We denote that $P$ is a scheme-theoretic point of $X$ by writing $P\in X$. For an extension $K/k$, we denote that $P$ is a $K$-rational point of $X$ by writing $P\in X(K)$. } of dimension $r$ over $k$. 
For a point $P \in X$, $\mathcal{O}_P$ denotes its local ring and $m_P$ denotes the maximal ideal of $\mathcal{O}_P$. 
Let $\Omega^r(X)$ denote the global sections of the sheaf of $k$-rational $r$-differential forms on $X$.
For a divisor $D$ of $X$, $Supp(D)$ denotes its support.  
The divisor associated to a rational function $f\in k(X)^*$ (respectively a  $k$-rational $r$-differential $0\neq \omega\in \Omega^r(X)$) is denoted by $(f)$ (respectively $(\omega)$).  For a $k$-rational divisor $D$ on $X$, 
let 
\begin{align*}
L(D) = \{ f \in k(X)^* | (f) + D \geq 0 \} \cup \{0\}
\end{align*}
denote the global sections of the associated invertible sheaf $\mathcal{O}(D)$ and let 

\begin{align*}
\Omega^r(D) = \{0\neq \omega \in \Omega^r(X) | (\omega) + D \geq 0\}\cup\{0\}
\end{align*}
denote the differentials in $\Omega^r(X)$ with poles bounded by $D$.

\indent When $X=C$ is a smooth projective curve over $k$ with a given set of $k$-rational points $\mathcal{P} = \{P_1, P_2, \dots, P_n\}$,  there are several ways to construct linear codes which we now briefly recall. More details can be found in \cite[Chapter 3.1]{tsfasman}. Let $G$  be a $k$-rational divisor  on $C$ with $ Supp(G) \cap \mathcal{P} = \emptyset$. Set $D= \sum_{i=1}^n P_i$. Goppa constructed the \emph{differential code} $C_{\Omega}(\mathcal{P}, G)$ that now bears his name as the image of the residue map 
\begin{align}\label{eqn:res_curves}
\begin{split}
    Res_{(\mathcal{P},G)} : \Omega^1(D - G) &\rightarrow k^n \\
                           \omega &\mapsto (Res_{P_1}\omega,  Res_{P_2}\omega, \dots, Res_{P_n}\omega )
\end{split}                           
\end{align}
where for a local parameter $t$ of $\mathcal{O}_{P_i}$ and $\omega = f dt, f \in k(C)$, $Res_{P_i} \omega$ is the coefficient of $t^{-1}$ in the Laurent series expansion 
\begin{align*}
    f = \sum_{j\in \mathbb{Z}} c_j t^j.
\end{align*}
Recall that $Res_{P_i} \omega$ is independent of the choice of the local parameter $t$ (\cite[Proposition 2.2.19]{tsfasman}).\\
The \emph{functional code} $C_L(\mathcal{P}, G)$ on $C$ is defined to be the image of the evaluation map 
\begin{align*}
    Ev_{(\mathcal{P}, G)}: L(G) &\rightarrow k^n \\
     f&\mapsto (f(P_1), f(P_2), \dots, f(P_n)).
\end{align*}

For the wonderful properties of these codes and their importance in coding theory we refer the reader to \cite{stitch}, \cite{walker}  and \cite{hoholdt}. We only mention  that (i) although functional codes get the most attention, differential codes are useful for decoding (\cite{decoding}); (ii)  $C_{\Omega}(\mathcal{P}, G)$ is the dual code to $C_L(\mathcal{P}, G)$, and (iii) as a further testament to their dual nature, one can show that every functional code is differential and vice-versa.

Let $\bar{k}$ be an algebraic closure of $k.$
Recall from \cite[Theorem 3.1.43]{tsfasman} that the proof of duality stated in  (ii) is almost a direct application of the \emph{residue theorem} on algebraic curves (\cite[Proposition 2.2.20]{tsfasman}),which states that for any $\omega \in \Omega^1_k(C)$.

\begin{align}\label{eqn:res_formula_curves}
    \sum_{P \in X(\overline{k})}Res_P \omega =0
\end{align}
together with the fact that  for any $f \in L(G)$,
\begin{equation}\label{linearity}
Res_{P_i}f\omega=f(P_i)Res_{P_i}\omega
\end{equation}
since $\omega\in\Omega^1(D-G)$ has at worst simple poles at the $P_i$. This fails in general if $\omega$ has a higher order pole.

The analogue of the functional construction for higher dimensional varieties followed almost immediately (\cite[Chapter 1, \S3]{lin_codes_modular_curves}, \cite[\S 3.1.1]{tsfasman}):

Let $X$ be an $r$-dimensional smooth projective variety over $k$. Given a set $\mathcal{P} = \{P_1, P_2 ,\dots , P_n\}$ of $k$-rational points on $X$ and a $k$-rational divisor $G$ with $Supp(G) \cap \mathcal{P} = \emptyset$, the \emph{functional code} $C_L(\mathcal{P}, G)$ is defined to be the image of the evaluation map 
\begin{align}\label{eqn:functional}
\begin{split}
    Ev_{(\mathcal{P}, G)}: L(G) &\rightarrow k^n \\
     f&\mapsto (f(P_1), f(P_2), \dots, f(P_n)).
     \end{split}
\end{align}

For a survey of work on these codes, we refer the reader to \cite{little}. Recall that the estimate of the minimum distance, which is easy in the case of codes from curves, becomes particularly elusive as soon as the variety has dimension greater than 1.

Much less work has been done on the higher dimensional analogue of the differential construction of codes, no doubt because of the difficulty of using higher dimensional residue formulas.  Getting a new description of codes involving differential forms may be helpful towards estimating the parameters of functional codes or their duals. The first approach to this was by the second-named author in his 2005 dissertation (heretofore unpublished), who used Grothendieck's theory of residues (\cite{massman}) as follows.   

For $X$ as above, let $\mathcal{D} = \{D_1,\dots,D_r\}$ be effective divisors on $X$ that intersect properly at a finite set of points (i.e, their set-theoretic intersection $\cap_iD_i$ is zero-dimensional, see (\cite[\href{https://stacks.math.columbia.edu/tag/0AZQ}{Tag 0AZQ}]{stacks-project}) for details). Taking a finite extension of $k$ if necessary, we can assume that all $D_i$ and all their points of intersection are rational over $k$. 
For $\omega\in \Omega^r(D)$ one can define   the notion of \emph{residue of $\omega$ relative to $\mathcal{D}$}, denoted by $Res_{P} \begin{bmatrix} \omega \\ {D_1, D_2, \dots, D_r} \end{bmatrix}$ for every point $P\in X$, that vanishes for $P\notin \cap_i{D_i}$, and satisfies the Grothendieck's Residue Theorem (see \S\ref{sec:groth_residues} for details) analogous to (\ref{eqn:res_formula_curves}): 

\begin{align*}
    \sum_{P \in X(\overline{k})} Res_{P} \begin{bmatrix} \omega  \\ {D_1, D_2, \dots, D_r} \end{bmatrix}  = 0. 
\end{align*}

With the above set up, Massman constructed a \emph{differential code} on $X$ denoted  $C_{\Omega}(\mathcal D, \mathcal{P}, G),$  where $\mathcal{P} = \cap_i D_i $  and $G$ is a divisor disjoint from $\mathcal{P}$ \cite{massman}.  He showed that
$C_{\Omega}(\mathcal D, \mathcal{P}, G)$ is contained in the dual code to $C_L(\mathcal{P}, G)$ if $D_1,\dots,D_r$ intersect transversally (i.e, the intersection multiplicity is $1$ at each point of intersection; see \cite[\href{https://stacks.math.columbia.edu/tag/0AZR}{Tag 0AZR}]{stacks-project} for more details).

In \cite{alain}, although aware that one could use Grothendieck's theory of residues, Couvreur did the community a great service by independently developing the theory of  differentials and residues on surfaces and using it to construct  differential codes that satisfy properties analogous to the case of curves (see Remark \ref{sec:couv} for a comparison of the theories). 
In the case that $D_1$ and $D_2$ satisfy certain conditions (so-called ``$\Delta$-convenience"), which includes the analogue of (\ref{linearity}), he was able to show that $C_{\Omega}(\mathcal D, \mathcal{P}, G)$ is contained in the dual of $C_L(\mathcal{P}, G),$ that every functional code is differential and that every differential code is functional. He also gave examples showing that, unlike the case for curves, a differential code associated to $(\mathcal{P}, G)$ could be a proper subspace of the dual of the functional code.

For more recent work on codes from higher dimensional varieties, see for example, \cite{alain2}, \cite{alain3}, \cite{newer1}, \cite{newer2}, and \cite{newer3}.

The main result of this paper is the construction of differential codes using Grothendieck's theory of residues for an $r$-dimensional smooth projective variety $X$, with a given set of  points $\mathcal P=\{P_1,\dots,P_n\}$ and {\it any}  effective divisors $\mathcal D=\{ D_1,\dots,D_r\}$ that intersect properly   such that $\mathcal P \subseteq \cap_i D_i$. This construction yields a code that always lies in the dual of the corresponding functional construction, recovers Massman's construction in the case that $D_1,\dots,D_r$ intersect transversally with $\mathcal P = \cap_i D_i,$ and when $r=2$ with $D_1$ and $D_2$ $\Delta$-convenient,  agrees with Couvreur's construction (see \S\ref{sec:couv}). 

This generalization of the differential construction is more than just a technical nicety: given a finite set of points $\mathcal P$ in $X$ there is no need for it to be exactly the transversal intersection of $r$ effective divisors of $X$ (Take for example $\mathcal{P} = \{[1:0:0], [0:1:0], [0:0:1]\}$ in $\mathbb{P}^2$).
 However, by one of  Poonen's  Bertini Theorems \cite[Theorem 3.3]{poonen}, one can find effective divisors $D_1, D_2, \dots, D_r$ such that $\mathcal{P} \subseteq \cap_{i=1}^r D_i$.

Our construction using residues on higher dimensional varieties produces linear codes that behave analogously to differential codes on curves, by satisfying nice properties such as duality (see \S \ref{sec:properties}). The key is to restrict the differential forms in the construction 
so that the analogue of (\ref{linearity}) still holds, by imposing additional vanishing conditions on the forms (such restrictions on differential forms are achieved via the notions of $\Delta$- and sub-$\Delta$-convenient pairs in  \cite{alain2}) . We do this by introducing the idea of a {\it $(\mathcal P,\mathcal D)$-rectifying function} $\theta$, and showing that if a form $\omega$ vanishes at the zeroes of $\theta$, then the analogue of (\ref{linearity}) holds. With this we get a ``rectified"  differential code $C_{\Omega}(\mathcal D, \mathcal{P}, G,\theta)$, which is in the dual 
of $C_L(\mathcal{P}, G).$

We also introduce the class of {\it strictly $(\mathcal P,\mathcal D)$-rectifying} functions $\theta^s$ and the corresponding notion of 
a {\it strictly rectified} differential code $C_{\Omega}(\mathcal D, \mathcal{P}, G,\theta^s)$, which has properties similar to differential codes on curves. In particular, every strictly rectified differential code is a functional code supported on the same set of points, and vice versa. We include a number of examples to illustrate the need for and use of $(\mathcal D,\mathcal P)$-rectifying functions and strictly
$(\mathcal D,\mathcal P)$-rectifying functions.

 We note that estimating the dimension and minimum distance of a rectified differential code will be precisely as difficult as the same thorny problem for functional codes on higher dimensional varieties and we will add nothing about it here.

The paper is organized as follows. In the next section we recall what we need of Grothendieck's theory of residues and use it to construct codes in \S\ref{sec:prelim_construction}.  We present some motivating examples in \S\ref{sec:examples} to illustrate the shortcomings of the previous constructions. Then we present our construction of rectified differential codes in \S\ref{sec:construction}. We derive their main properties in \S\ref{sec:properties} and revisit our previous examples in this new light. 


\section{Grothendieck's Theory of Residues} \label{sec:groth_residues}
The general theory of residues was introduced by Grothendieck \cite{grothendieck} to establish duality theorems for the cohomology of arbitrary proper varieties over perfect fields, although in this paper all the varieties are assumed to be projective over finite fields.  This is studied in detail in \cite{lipman_dualizing} and \cite{hartshorne_residues}. An overview of the theory  that is relevent to this paper can also be found in \cite{lipman_elementary}, \cite{lipman_lecture} and \cite{hopkins_residue},  which we  briefly recall  here. We will also use results from intersection theory. Some comprehensive references are \cite{fulton} and \cite[\href{https://stacks.math.columbia.edu/tag/0AZ6}{Tag 0AZ6}]{stacks-project}, although for our purposes, \cite[Chapter V, \S1 and Appendix A]{hartshorne} will suffice.\\
\indent Let $X$ be an $r$-dimensional smooth projective variety over $k$ and  $\omega$ a $k$-rational $r$-differential regular in an open neighborhood of  $P \in X(k)$. 
Then given any set of $k$-rational generators $\mathbf{x} = \{x_1, x_2, \dots, x_r\}$  of $m_P$, $\omega$ can locally be expressed as a power series 
\begin{align} \label{eqn:expn}
\omega = \sum_{I \in \mathbb{N}^r} c_I \mathbf{x}^I d\mathbf{x},
\end{align}
where  for $I = \{i_1, i_2, \dots, i_r\} \in \mathbb{N}^r$, $\mathbf{x}^I := x_1^{i_1}x_2^{i_2}\dots x_r^{i_r}$, $d\mathbf{x}:=dx_1 \wedge dx_2 \wedge \dots \wedge dx_r $ and $c_I \in k$. For any set of positive integers $a_1,\dots,a_r$,  the  \emph{(Grothendieck) residue}, denoted $Res_P \begin{bmatrix}\omega \\ x_1^{a_1}, x_2^{a_2}, \dots, x_r^{a_r} \end{bmatrix},$ is defined to be
\begin{align}\label{eqn:res'}
Res_P \begin{bmatrix}\omega \\ x_1^{a_1}, x_2^{a_2}, \dots, x_r^{a_r} \end{bmatrix} = c_{\{a_1-1, a_2-1, \dots a_r-1\}.}
\end{align}
  
Recall that a \textit{regular system of parameters} (also known as local parameters) for $O_P$ is a minimal set of generators of $m_P$ and \textit{a system of parameters} $\mathbf f=\{f_1, f_2, \dots, f_r\}$ are elements of $m_P$ such that for some $a>0,$ $m_P^a\subseteq (f_1,\dots,f_r)$, the ideal in $\mathcal O_P$ generated by $\mathbf f$.   For such an $a$, write
\begin{align*}
    x_i^a = \sum_j r_{ij}f_j, \;\;\;\;\;\; r_{ij} \in \mathcal{O}_P.
\end{align*}
We set
\begin{align*}
    R_P(\mathbf{x}, \mathbf{f}, a) := det[r_{ij}] \in \mathcal{O}_P,
\end{align*}
where $\det$ denotes taking the determinant, and define the \emph{residue of $\omega$ relative to $\{f_1, f_2, \dots, f_r\}$}, denoted  $Res_P \begin{bmatrix} {\omega} \\ {f_1, f_2, \dots, f_r} \end{bmatrix} $, to be 

\begin{align} \label{eqn:basechange}
    Res_P \begin{bmatrix}\omega \\ f_1, f_2, \dots, f_r \end{bmatrix} :=  Res_P \begin{bmatrix} R_P(\mathbf{x}, \mathbf{f}, a) \cdot \omega \\ x_1^a, x_2^a, \dots, x_r^a \end{bmatrix}.
\end{align}
 This definition is independent of $a$ and the choice of the generators $\{x_1, x_2 , \dots, x_r\}$ of $m_P$ (\cite[\S7]{lipman_dualizing} and \cite[\S9]{hartshorne_residues}).  
 Suppose we also have $x_i^a = \sum_j r_{ij}^{'}f_j,~ r_{ij}^{'} \in \mathcal{O}_P$. Then  by taking $R=M= \mathcal{O}_P$ in Lemma 7.2 in \cite{lipman_dualizing} and noting that $\mathcal{O}_P$ has depth $r$,   we get 
 \begin{equation}\label{detdiff}
det[r_{ij}]-det[r_{ij}^{'}]\in (x_1^a,\dots,x_r^a),
 \end{equation}
so the definition is also independent of the choice of $r_{ij}$.  

Suppose we are given a set $D_1, D_2, \dots , D_r$ of $k$-rational effective divisors on $X$ that intersect properly.  Let $\mathcal{D}= \{D_1, D_2, \dots, D_r\}$ be the corresponding ordered set. Let $P$ be a $k$-rational point in 
$\cap_i D_i$. (Throughout, any sum or intersection is over the index set $\{1,\dots,r\}$ unless  specified otherwise.) Let $f_i$ denote $k$-rational local equations for $D_i$ in an open neighborhood of $P$ that contains no other point of $\cap_i D_i$. Note that $\{f_1,f_2, \dots, f_r\}$ is a system of parameters of $\mathcal{O}_P$.

\begin{defn}\label{defn:residue}
With notations above, let  $\omega \in \Omega^r(X)$ be such that $f_1f_2\dots f_r \omega$ is a regular  differential in an open neighborhood  of $P$.  We define the residue of $\omega$ relative to $\mathcal{D}$ to be 

\begin{align}\label{eqn:notinintersection}
    Res_P \begin{bmatrix} {\omega} \\ {D_1, D_2, \dots, D_r} \end{bmatrix}  :=\begin{cases} Res_P \begin{bmatrix} f_1f_2\dots f_r{\omega} \\ {f_1, f_2, \dots, f_r} \end{bmatrix}, \text{~if~} P \in \cap_i D_i\\
    0, \text{~otherwise}
    \end{cases}
\end{align}

\end{defn}

\begin{remark}\label{rmk:welldefined} We note that Definition \ref{defn:residue} is well-defined, i.e, that the formula for $Res_P \begin{bmatrix} {\omega} \\ {D_1, D_2, \dots, D_r} \end{bmatrix}$ is independent of the choice of local equations $f_i$ for $D_i$. 

Indeed, let $\{g_1, g_2, \dots, g_r\}$ denote another set of local equations for $\{D_1, D_2, \dots, D_r\}$. Then $f_i = e_i g_i$ for some $e_i \in \mathcal{O}_P^*$ and hence $\mathbf{f} = E \mathbf{g}$ where   $E \in GL_r(\mathcal{O}_P)$ is the diagonal matrix with diagonal entries $\{e_i\}$. From (\ref{eqn:basechange}), we see that 
\begin{align*}
Res_P \begin{bmatrix} g_1g_2\dots g_r{\omega} \\ {g_1, g_2, \dots, g_r} \end{bmatrix} 
   &=  Res_P \begin{bmatrix} det(E) g_1g_2\dots g_r{\omega} \\ {f_1, f_2, \dots, f_r} \end{bmatrix}\\
   &=  Res_P \begin{bmatrix} f_1f_2\dots f_r{\omega} \\ {f_1, f_2, \dots, f_r} \end{bmatrix}.
\end{align*}
 
\end{remark}

\begin{remark}
A similar argument shows a permutation of the $D_i$'s in Definition \ref{defn:residue} changes the sign of the residue at $P$ by the sign of the permutation.
\end{remark}

\begin{remark}
Note that in the above definitions, the point, the differential and the divisors are all defined over $k$, so that the residue will be defined over $k$, which is crucial for applications to coding theory. However it is clear from the definitions that if $k'$ is any algebraic extension of $k$,  the residue is unchanged if we consider the same variety, point, differential, and divisors as being defined over $k'$.  We will use this fact repeatedly and without further comment. 
In particular, Definition \ref{defn:residue} can be applied with $k$ replaced by $\bar{k}$, so we have an unambiguous definition of residue at any point $P\in X(\bar{k}).$
\end{remark}

\begin{remark}
For a divisor $H$ on $X$,  we write $H = H^+ - H^-$ where $H^+$ and $H^-$ are effective. Suppose that
$Supp(H^+)$  is disjoint from $ \cap_i D_i$ and let $D = \sum_i D_i$. Then for  every $\omega \in \Omega^r(D + H) $, it is straightforward to check that $f_1f_2\dots f_r \omega$ is  regular  at  every $P \in \cap_i D_i$.
\end{remark}
\begin{remark}
Note that when $X$ is a curve ($r=1$) and $D$ is the sum of distinct points, the above definition agrees with the usual notion of the residue of a rational differential form that has  poles of order at most one at each point.
\end{remark}

\begin{remark}\label{rmk:rational}
Assuming all $D_i$ and $\omega$ are $k$-rational doesn't insure that
every $P\in \cap_i D_i$ is $k$-rational, but only that $P$ is  rational over its residue field $K=k(P)$, a finite extension of $k$, and as such its set of conjugates over $k$ form a ``closed point" $\bar{P}$
over $k$. In this case   the  definition of a residue at $\bar{P}$ is given by (page 63-64 in \cite{lipman_dualizing}),
$$ Res_{\bar{P}} \begin{bmatrix} {\omega} \\ {D_1, D_2, \dots, D_r} \end{bmatrix}
=Tr_{K/k}(Res_P \begin{bmatrix} {\omega} \\ {D_1, D_2, \dots, D_r} \end{bmatrix}),$$
where $Tr_{K/k}$ denotes taking the trace from $K$ to $k$. In this paper, by extending the base field if necessary, we will always implicitly assume  that every $P \in \cap_iD_i$ is $k$-rational. So we will never need to employ this more general definition.
\end{remark}

Recall that the classical residue theorem for curves states that the sum of residues of a meromorphic differential form is zero. The higher dimensional analogue is \cite[Proposition 12.2]{lipman_dualizing}:
\begin{thm}[Residue Theorem]\label{thm:residuethm}
Let $X$ be a smooth projective variety defined over $k$.
Suppose that $D_1, D_2, \dots,D_r$ are properly intersecting $k$-rational effective divisors.  
Let $\omega \in \Omega^r(\sum_iD_i +H)$ with $H$ a $k$-rational divisor such that $Supp(H^+)$ is disjoint from $\cap_iD_i$.  Then 
\begin{align*}
    \sum_{P \in X(\overline{k})} Res_{P} \begin{bmatrix} \omega \\ {D_1, D_2, \dots, D_r} \end{bmatrix}  = 0.
\end{align*}

Hence by (\ref{eqn:notinintersection}), we get 
\begin{align*}
    \sum_{P_i \in \cap_i D_i} Res_{P_i} \begin{bmatrix} \omega  \\ {D_1, D_2, \dots, D_r} \end{bmatrix}  = 0.
\end{align*}
\end{thm}

\subsection{Comparison with Couvreur's definition of residues for surfaces}\label{sec:couv}
 For a smooth projective surface $X$ over $k$, a rational 2-differential $\omega$ on $X$, and a curve $C$ through a point $P\in X(\bar{k})$, Couvreur \cite{alain} defines the notion of residue of $\omega$  at $P$ along $C$. To allow the reader to compare our results with those in \cite{alain}, we will explain how to interpret Couvreur's residue in terms of Grothendieck's. In particular, we show that for a smooth projective surface, the notion of residue in \cite{alain} agrees with ours  given in Definition \ref{defn:residue}.

In \cite{alain}, the notion of residue  at a point $P$ along $C$ is given by two cases which we will briefly recall here. If $C$ is smooth at $P$, then there is a local equation $v$ for $C$ at $P$ that extends to a pair $(u,v)$ of regular system of  parameters at $P.$ For any rational 2-differential $\omega$ on $X$, Couvreur shows there is a Laurent series in $v$ with coefficients which are Laurent series in $u,$ such that the expansion at $P$ of $\omega$ is of the form
\begin{align}\label{eqn:couv}
\sum_{i\geq  -l}b_i(u)v^i du\wedge dv
\end{align}
for some non-negative integer $l$. The residue of $\omega$  at $P$ along $C$, denoted $res^2_{C,P}(\omega)$, is then defined to be the (1-dimensional) residue of $b_{-1}(u)du$. For more details and properties of $res^2_{C,P}(\omega)$, see \S3 and \S4 in \cite{alain}.

Suppose $C$ is not smooth at $P$ and 
let $\pi:\tilde{X}\rightarrow X$ be a sequence of monoidal tranformations that resolves the singularity of $C$ at $P$. Let $\tilde{C}$ be the strict transformation of $C$ on $\tilde{X}$. By abuse of notation, we will also denote by $\pi$, the induced map $\tilde{C} \rightarrow C $. In this case, the residue of $\omega$  at $P$ along $C$, is defined to be (\cite[Definition 5.2]{alain})
\begin{align*}
    res^2_{C,P}(\omega) = \sum_{Q\in \pi^{-1}(P)}res^2_{\tilde{C},Q}(\pi^*\omega).
\end{align*}

\begin{prop}\label{prop:comparison}
Suppose $\omega\in \Omega^2(X)$ and that $C$ is a $k$-rational curve on $X$ through a $k$-rational point $P$.  Then
\begin{align*}
res^2_{C,P}(\omega)=Res_{P} \begin{bmatrix} \omega  \\{D,rC} \end{bmatrix}
\end{align*}
for some integer $r$ and effective $k$-rational divisor $D$.
\end{prop}

\begin{proof}
For $l$ as in (\ref{eqn:couv}), pick any $r>l$  and an effective $k$-rational divisor $D$ such that $D$ and $rC$  have no components in common and  $(\omega)+D+rC$ is effective. First we establish the claim in the case that $C$ is smooth at $P$. Let $u$ and $v$ be as above, which we can take to be $k$-rational. Let $f$  be a $k$-rational local equation for $D$ at $P$. Then by Definition \ref{defn:residue},

\begin{align*}
Res_{P} \begin{bmatrix} \omega  \\{D,rC} \end{bmatrix}= Res_{P} \begin{bmatrix} fv^r\omega  \\{f,v^r} \end{bmatrix}
\end{align*}
Let $a\geq r$ be such that  $(u,v)^a\subseteq (f,v^r)$.
Now write
\begin{align} \label{eqn:rij}
    u^a = \alpha f+\beta v^r,~v^a=0\cdot f+v^{a-r}\cdot v^r,
\end{align}
for some $\alpha, \beta \in \mathcal O_P$.
Then $R_P(\{u,v\}, \{f,v^r\}, a) = \alpha v^{a-r}$. So by definition,
\begin{align*}
Res_{P} \begin{bmatrix} f v^r\omega \\ {f,v^r} \end{bmatrix} =Res_{P} \begin{bmatrix}\alpha f v^{a-r}v^r\omega\\ {u^a,v^a}\end{bmatrix} = Res_{P} \begin{bmatrix} (u^a-\beta v^r)v^a\omega\\ {u^a,v^a}\end{bmatrix}
\end{align*}
which is the coefficient of $u^{a-1}v^{a-1} du\wedge dv$ in the expansion of
$(u^a-\beta v^r)v^a\omega$.

With notation as above, 
\begin{align*}
(u^a-\beta v^r)v^a\omega=
(u^a-\beta v^r)v^a  \sum_{i\geq -l}b_i(u)v^i du\wedge dv.
\end{align*}

Note that the coefficient of $u^{a-1}v^{a-1} du \wedge dv$ is  the (1-dimensional) residue of $b_{-1}(u)du$ which is exactly the definition  $res^2_{C,P}(\omega)$.

Now suppose that $C$ is not smooth at $P$, and 
let $\pi:\tilde{X}\rightarrow X$ be as above. 
Note that $\pi$ is a surjective birational map over $k$ that is an isomorphism from $\pi^{-1}(X-P)$ onto $X-P$. 
Now by picking $r$ sufficiently large and   applying the Residue Theorem (Theorem \ref{thm:residuethm}) on both $X$ and $\tilde{X},$
we have 

\begin{align*}
Res_{P} \begin{bmatrix}\omega\\{D,rC}\end{bmatrix} &= -\sum_{P'\in X(\overline{k}), P'\neq P}
Res_{P'} \begin{bmatrix} \omega\\ {D,rC}\end{bmatrix}\\
&= -\sum_{P'\in X(\overline{k}), P'\neq P}Res_{\pi^{-1}(P')} \begin{bmatrix}\pi^*\omega\\{\pi^* D,r\tilde{C}}\end{bmatrix}\\
&=-\sum_{Q'\in \tilde{X}(\overline{k}), Q'\not\in \pi^{-1} (P)}Res_{Q'} \begin{bmatrix} \pi^*\omega\\ {\pi^* D,r\tilde{C}}\end{bmatrix}\\
&=\sum_{Q\in \pi^{-1}(P)}
Res_{Q} \begin{bmatrix} \pi^*\omega\\ {\pi^* D,r\tilde{C}}\end{bmatrix}\\
&=\sum_{Q\in \pi^{-1}(P)}res^2_{\tilde{C},Q}(\pi^*\omega)=res^2_{C,P}(\omega).
\end{align*}
Here we're using for all $P' \neq P$ in $X(\overline{k})$, that
\begin{align*}
Res_{P'} \begin{bmatrix} \omega\\ {D,rC}\end{bmatrix}= Res_{\pi^{-1}(P')} \begin{bmatrix}\pi^*\omega\\{\pi^* D,r\tilde{C}}\end{bmatrix},
\end{align*}
which follows from the fact that  $\pi$ is an isomorphism at every point  away from $\pi^{-1}(P)$ (\cite[Proposition 7.13(b)]{hartshorne}). 
\end{proof}

\section{Codes from residues: Preliminary construction} \label{sec:prelim_construction}
Now that we have defined and studied the properties of residues  on higher dimensional varieties, we will give a preliminary construction of codes 
analogous to those for codes attached to
curves. As before, $X$ is an $r$-dimensional smooth projective variety over $k$. Let $\mathcal{D} = \{D_1, D_2, \dots, D_r\}$ be an (ordered) set of $k$-rational divisors that intersect properly. Set $D=\sum_i D_i$. We assume that the points of intersection $\cap_i D_i$ of these divisors are $k$-rational (see Remark \ref{rmk:rational}).  Then given any $\mathcal{P}\subseteq \cap_i D_i$ and  a $k$-rational divisor $G$ whose support is disjoint from $\mathcal{P}$, analogous to (\ref{eqn:res_curves}), we construct the code $C_{\Omega}(\mathcal D, \mathcal{P}, G)$ as the image in $k^n$
of the map
\begin{align}\label{eqn:naiveconst}
    Res_{(\mathcal{D},\mathcal{P},G)}: \Omega^r(D-G) &\rightarrow k^n  \\
       \omega &\mapsto \big (Res_{P_1} \begin{bmatrix} {\omega} \\ {D_1, D_2, \dots, D_r} \end{bmatrix}, 
       \dots, Res_{P_n} \begin{bmatrix} {\omega} \\ {D_1, D_2, \dots, D_r} \end{bmatrix} \big ) \nonumber
\end{align}

Although this is a well-defined construction of codes using residues, we will see that unlike the case of curves, these codes do not always satisfy nice properties such as being dual to functional code $C_L(\mathcal{P}, G)$. We will demonstrate this with some examples.

\section{Some examples} \label{sec:examples}

For a homogeneous polynomial $f \in k[X_0,\dots,X_n]$, we let $V(f)$ denote the algebraic set in $\mathbb{P}^n$ given by $f=0$. For $q$ a power of a prime, we let $\mathbb{F}_q$ denote the field with $q$ elements.

\begin{example} 
Let $k=\mathbb{F}_4 \simeq \mathbb{F}_2[\alpha]/(\alpha^2 +\alpha +1)$. Consider  the projective plane $\mathbb{P}^2$ over $k$ with homogeneous coordinates $X,Y,$ and $Z$.
Let $D_1 = V(X)$ and $D_2 = V(XY^3 + X^2Z^2 + X^2Y^2 + X^3Z+Y^3Z  + Z^2Y^2+ YZ^3)$. We claim that their intersection is $\mathcal{P} = D_1\cap D_2 = \{ [0:0:1], [0:1:0], [0:1:\alpha], [0:1:\alpha+1]\}$. It is clear that $\mathcal{P} \subseteq D_1 \cap D_2$. For the other inclusion, note that by Bezout's theorem (\cite[Corollary I.7.8]{hartshorne}), the total number of points of intersection of $D_1$ and $D_2$ counting multiplicities is equal to the product of degrees of $D_1$ and $D_2,$ which is four. This also shows that the intersection is transversal at each point. Let  $G = V(Y+Z)$, which is disjoint from all points in $\mathcal P$. 

Now let us construct a differential code using the residue map given in (\ref{eqn:naiveconst}).

Let $x = X/Z$ and $y = Y/Z$ be the coordinates of the affine chart $Z\neq 0$. It is easy to check that the following  differential forms  lie in $\Omega^r(D_1+D_2-G)$:
\begin{align*}
    \omega_1 &= \frac{y+1}{x(xy^3 + x^2 + x^2y^2 + x^3 + y^3 + y^2 + y)} dx \wedge dy, \\
    \omega_2 &= \frac{(y+1)x}{x(xy^3 + x^2 + x^2y^2 + x^3 + y^3 + y^2 + y)} dx \wedge dy,\\
    \omega_3 &= \frac{(y+1)y}{x(xy^3 + x^2 + x^2y^2 + x^3 + y^3 + y^2 + y)} dx \wedge dy.
\end{align*}
Now let $\eta$ be any differential in $\Omega^2(\mathbb{P}^2)$, and $(\eta)=K$ be a canonical divisor. Then for any $f\in k(X)^*$, $(f\eta)\geq -(D_1+D_2)+G$ precisely when
$(f)\geq -K-(D_1+D_2)+G$, so $dim~ \Omega^2(D_1+D_2 - G) =dim~L(K + D_1+D_2 -G)$. Since the Picard group of $\mathbb{P}^2$, $Pic~\mathbb{P}^2  = \mathbb{Z}$ (\cite[Corollary II.6.17]{hartshorne}) and $deg~K = -3$ (\cite[Example II.8.20.1]{hartshorne}), we get
 $\mathcal{O}(K + D_1+D_2 -G)\cong \mathcal{O}(-3+5-1) = \mathcal{O}(1)$,
where $\mathcal O(1)$ is the twisting sheaf of Serre, and $\mathcal O(n)$ is its $n^{th}$-power. So by Example 7.8.3, Chapter II in \cite{hartshorne}, we conclude that
\begin{equation*}
    dim~ \Omega^2(D_1+D_2 - G) 
     = 3,
\end{equation*}

Therefore $\omega_1, \omega_2,$ and $\omega_3$ generate $\Omega^r(D_1+D_2-G)$. Now let us compute the residues of these forms at the points in $\mathcal{P}$. \\

\leftline{\underline{Residues at $P_1 = [0:0:1]$}} Since $P_1$ lies in the affine chart $Z\neq 0$, $m_{P_1}$ is generated by $x$ and $y$. Moreover in this chart $D_1$ and $D_2$ are cut out by $f_1=x$ and  $f_2= y+ y^3 + xy^3 + x^2 + x^2y^2 + x^3 +  y^2 $. Let $\mathbf f=\{f_1,f_2\}.$ Note that 
\begin{align*}
    x &=  1 (f_1) + 0(f_2), \\
    y &= \frac{-(y^3 + x + xy^2 + x^2)}{y^2 + y +1} (f_1)+ \frac{1}{y^2 + y +1}(f_2).
\end{align*}
Therefore 
\begin{align*}
    R_{P_1}(\{x,y\},\mathbf f , 1) = \frac{1}{y^2 + y +1}.
\end{align*}
We can now easily compute the residues:
\begin{align*}
  Res_{P_1} \begin{bmatrix} {\omega_1} \\ {D_1, D_2} \end{bmatrix}  &= Res_{P_1} \begin{bmatrix} f_1f_2\omega_1 \\ {f_1, f_2} \end{bmatrix} \\
  &= Res_{P_1}\begin{bmatrix} (y+1)dx\wedge dy \\ {f_1, f_2} \end{bmatrix}\\
  &=Res_{P_1}\begin{bmatrix} R_{P_1}(\{x,y\},\mathbf f, 1) (y+1)dx\wedge dy \\ {x, y} \end{bmatrix}\\
  &=Res_{P_1}\begin{bmatrix} \frac{1}{y^2 + y +1} (y+1)dx\wedge dy \\ {x, y} \end{bmatrix}\\
  &=1.
\end{align*}
Similarly we get
\begin{align*}
    Res_{P_1} \begin{bmatrix} {\omega_2} \\ {D_1, D_2} \end{bmatrix} &= 0,\\
    Res_{P_1} \begin{bmatrix} {\omega_3} \\ {D_1, D_2} \end{bmatrix} &= 0.
\end{align*}
\leftline{\underline{Residues at $P_2 = [0:1:0]$}}

Note that 
$P_2$ lies in the affine chart $Y \neq 0$. Let $s = X/Y$ and $t = Z/Y$ be the coordinates of this affine chart. Note that $m_{P_2}$ is generated by $s$ and $t$. In this chart, $D_1$ and $D_2$ are given by $g_1=s$ and $g_2 = t^3 + t +s + s^2t^2 + s^2 + s^3t + t^2$ respectively. Let $\mathbf g=\{g_1,g_2\}.$ 
Now
\begin{align*}
  s &= 1 (g_1) + 0( g_2), \\ 
  t &= \frac{-(1+st^2+s+s^2t)}{1+t+t^2} (g_1) + \frac{1}{1+t+t^2}( g_2),
 \\
 R_{P_2}(\{s,t\},\mathbf g, 1) &= \frac{1}{1+t+t^2}.
\end{align*}
Moreover, under the change of coordinates $x = s/t$ and $y = 1/t$, the differential forms in the chart $Y \neq 0$ are given by
\begin{align*}
    \omega_1 &= \frac{(t+1)t}{(s)(t^3 + t +s + s^2t^2 + s^2 + s^3t + t^2)}ds \wedge dt,\\
    \omega_2 &= \frac{s(t+1)}{(s)(t^3 + t +s + s^2t^2 + s^2 + s^3t + t^2)}ds \wedge dt,\\
    \omega_3 &= \frac{t+1}{(s)(t^3 + t +s + s^2t^2 + s^2 + s^3t + t^2)}ds \wedge dt.
\end{align*}
The residues can be easily computed as before:
\begin{align*}
    Res_{P_2} \begin{bmatrix} {\omega_1} \\ {D_1, D_2} \end{bmatrix} &= 0,\\
    Res_{P_2} \begin{bmatrix} {\omega_2} \\ {D_1, D_2} \end{bmatrix} &= 0,\\
    Res_{P_2} \begin{bmatrix} {\omega_3} \\ {D_1, D_2} \end{bmatrix} &= 1.
\end{align*}
\leftline{\underline{Residues at $P_3 = [0:1:\alpha]$}}

This point is also in the chart $Y \neq 0$ with coordinates $s,t$. Note that $m_{P_3}$ is generated by $s$ and $t- \alpha$ and 
\begin{align*}
    s &= 1 (g_1) + 0( g_2), \\ 
  t- \alpha &= \frac{-(1+st^2+s+s^2t)}{t(t- (\alpha +1))} (g_1) + \frac{1}{t(t- (\alpha +1))}( g_2),
 \\
 R_{P_3}(\{s,t-\alpha\},\mathbf g, 1) &= \frac{1}{t(t- (\alpha +1))}.
\end{align*}
The residues are 
\begin{align*}
     Res_{P_3} \begin{bmatrix} {\omega_1} \\ {D_1, D_2} \end{bmatrix} &= \alpha +1,\\
    Res_{P_3} \begin{bmatrix} {\omega_2} \\ {D_1, D_2} \end{bmatrix} &= 0\\
    Res_{P_3} \begin{bmatrix} {\omega_3} \\ {D_1, D_2} \end{bmatrix} &= \alpha.
\end{align*}

\leftline{\underline{Residues at $P_4 = [0:1:\alpha + 1]$}}

This point is also in the chart $Y \neq 0$ with coordinates $s,t$. Note that $m_{P_4}$ is generated by $s$ and $t- (\alpha+1)$ and 
\begin{align*}
    s &= 1 (g_1) + 0( g_2), \\ 
  t- (\alpha+1) &= \frac{-(1+st^2+s+s^2t)}{t(t- \alpha)} (g_1) + \frac{1}{t(t- \alpha)}( g_2),
 \\
 R_{P_4}(\{s,t-(\alpha+1)\},\mathbf g, 1) &= \frac{1}{t(t- \alpha) }
\end{align*}
The residues are 
\begin{align*}
     Res_{P_4} \begin{bmatrix} {\omega_1} \\ {D_1, D_2} \end{bmatrix} &= \alpha,\\
    Res_{P_4} \begin{bmatrix} {\omega_2} \\ {D_1, D_2} \end{bmatrix} &= 0,\\
    Res_{P_4} \begin{bmatrix} {\omega_3} \\ {D_1, D_2} \end{bmatrix} &= \alpha+1.
\end{align*}
Therefore we see that $C_{\Omega}(\mathcal{D}, \mathcal{P}, G)$ is  two dimensional and is spanned by $(1, 0, \alpha +1, \alpha)$
and $(0, 1, \alpha, \alpha +1)$.  
\\

Now we will show that this code is dual (orthogonal) to the  corresponding functional code. Note that since $G=V(Y+Z)$, the corresponding  functional code $C_L(\mathcal{P},G)$ is  generated as a $k$-vector space  by the images of  $X/(Y+Z), Y/(Y+Z)$ and $Z/(Y+Z)$ under the evaluation map at $(P_1, P_2, P_3, P_4),$ that is
\begin{align*}
    Ev_{(\mathcal{P},G)}: L(G) &\rightarrow \mathbb{F}_4^4\\
    X/(Y+Z) &\mapsto (0, 0, 0, 0)\\
    Y/(Y+Z) &\mapsto (0, 1, \alpha, \alpha+1) \\
    Z/(Y+Z) &\mapsto (1, 0, \alpha +1, \alpha),
\end{align*}
and hence is 2-dimensional.
One can easily check that  the differential code $C_\Omega(\mathcal D, \mathcal P,G)$ is the dual code to  $C_L(\mathcal{P}, G)$ i.e, $C_\Omega(\mathcal D, \mathcal P,G)=C_L(\mathcal{P}, G)^{\perp}$.  In fact, in this case the code $C_L(\mathcal{P}, G)$ is self-dual and $C_\Omega(\mathcal D, \mathcal P,G)=C_L(\mathcal{P}, G).$
\end{example}

\begin{example} \label{ex:subset}\normalfont
Consider the setting as in the previous example with same $D_1$, $D_2$ and $G$ but with $\mathcal{P}_0= \{ [0:0:1], [0:1:0], [0:1:\alpha] \} \subsetneq D_1 \cap D_2 = \mathcal{P}$.
In this case, we note that $C_L(\mathcal{P}_0, G)$  and $C_\Omega(\mathcal D, \mathcal{P}_0,G)$ are  obtained respectively by puncturing $C_L(\mathcal{P}, G)$ and $C_\Omega(\mathcal D, \mathcal P,G)$ on the last coordinate. Clearly,  $C_\Omega(\mathcal D, \mathcal{P}_0,G) \nsubseteq C_L(\mathcal{P}_0, G)^{\perp}$.  However,  note that $C_L(\mathcal{P}_0, G)^{\perp}$ contains 
\begin{align*}
C_0&=Res_{(\mathcal{D},\mathcal{P}_0, G)}\{\omega\in \Omega^2(D_1+D_2-G)|Res_{P_4}(\omega)=0\}\\
&= Res_{(\mathcal{D},\mathcal{P}_0, G)}((\alpha+1)\omega_1 - \alpha\omega_3, \omega_2)= span((\alpha+1, \alpha, 1)).
\end{align*}
By counting dimensions, we conclude that $C_0  =C_L(\mathcal{P}_0, G)^{\perp}$. \\

From the above example, we see that to have the image of the residue map lying in the dual to  $C_L(\mathcal{P}_0, G)$  one has to restrict the space of differential forms  $\Omega^2(D_1+D_2-G)$ to  the subspace where every $\omega$ satisfies $Res_{P_4}(\omega)=0$.  We will see in the next section  how to formalize this condition in order to obtain a revised definition of the differential code  that will lie in the dual $C_L(\mathcal{P}_0, G)^{\perp}$ .
\end{example}

\begin{example}\label{ex:nontrans}\normalfont
Let $k=\mathbb{F}_9 \simeq \mathbb{F}_3[\alpha]/(\alpha^2 +1)$. Again, let our variety be  $\mathbb{P}^2$ with homogeneous coordinates $X,Y,$ and $Z$. We take $\mathcal{P} = \{ P_1=[1:1:1], P_2 =[-1:1:1], P_3 =[\alpha : 0 :1], P_4 = [- \alpha :0:1], P_5=[0:1:0]\}$ and $G = V(Y+Z)$.
If we let $D_1 = V(Y(YZ - X^2))$ and $D_2 = V(YZ + X^2 - 2Z^2)$. Then $\mathcal{P} \subseteq D_1 \cap D_2$.  We will see below that the intersection is not transversal at $P_5$. Hence by Bezout's theorem we conclude that the intersection multiplicity at $P_5$ is two, and that 
$\mathcal{P} = D_1 \cap D_2$. Note that $P_1, P_2, P_3$ and $P_4$ are contained in the affine chart $Z\neq 0$ with coordinates $x=X/Z, y=Y/Z$ as before. The local equations for $D_1$, $D_2$, and $G$ are given by $f_1=y(y-x^2)$, $f_2 =y+x^2-2$, and $y+1$.
 As before  $dim~ \Omega^2(D_1+D_2 - G) =3$ and is generated by the following differential forms 
 on the affine chart $Z \neq 0,$
\begin{align*}
    \omega_1 &= \frac{(y+1)}{y(y-x^2)(y+x^2-2)} dx \wedge dy,\\
    \omega_2 &= \frac{(y+1)x}{y(y-x^2)(y+x^2-2)} dx \wedge dy, \\
    \omega_3 &= \frac{(y+1)y}{y(y-x^2)(y+x^2-2)} dx \wedge dy.
\end{align*}
 Let $\mathbf f=\{f_1,f_2\}$.  The local parameters at $P_1$, $P_2$, $P_3$ and $P_4$ can be expressed in terms of $\mathbf{f}$ as follows:\\

\underline{At $P_1$:}
\begin{align*}
    x-1 &= \frac{-1}{2y(x+1)} f_1 + \frac{1}{2(x+1)} f_2,\\
    y-1 &= \frac{1}{2y}f_1 + \frac{1}{2}f_2.
\end{align*}
\underline{At $P_2$:}
\begin{align*}
    x+1 &= \frac{-1}{2y(x-1)} f_1 + \frac{1}{2(x-1)} f_2,\\
    y-1 &= \frac{1}{2y}f_1 + \frac{1}{2}f_2.
\end{align*}
\underline{At $P_3$:}
\begin{align*}
    x-\alpha &= \frac{-1}{(y-x^2)(x+\alpha)} f_1 + \frac{1}{(x+\alpha)} f_2,\\
    y &= \frac{1}{(y-x^2)}f_1. 
\end{align*}
\underline{At $P_4$:}
\begin{align*}
x+\alpha &= \frac{-1}{(y-x^2)(x-\alpha)} f_1 + \frac{1}{(x-\alpha)} f_2,\\
    y &= \frac{1}{(y-x^2)}f_1. 
\end{align*}
Computing $R_{P_i}(\mathbf{x}, \mathbf{f}, a)$ for local parameters $\mathbf x$ at these points as in example \ref{ex:subset} yields:
\begin{align*}
    R_{P_1}(\{x-1,y-1\}, \mathbf{f}, 1) &= \frac{1}{y(x+1)},\\
    R_{P_2}(\{x-2,y-1\}, \mathbf{f}, 1) &= \frac{1}{y(x-1)},\\
    R_{P_3}(\{x-\alpha,y\}, \mathbf{f}, 1) &= \frac{-1}{(y-x^2)(x+\alpha)},\\
    R_{P_4}(\{x+\alpha,y\}, \mathbf{f}, 1) &= \frac{-1}{(y-x^2)(x-\alpha)}.
\end{align*}

Note that the point $P_5$ lies in the affine chart $Y\neq 0$ with coordinates $s=X/Y, t=Z/Y,$ as before.  In this chart $D_1$, $D_2$, and $G$ are cut out by $g_1=t-s^2$, $g_2=t+s^2 - 2t^2$, and $1+t$. 
After changing coordinates, the differential forms in this chart  are given by
\begin{align*}
    \omega_1 &= \frac{-t(1+t)}{(t-s^2)(t+s^2-2t^2)} ds \wedge dt,\\
    \omega_2 &= \frac{-s(1+t)}{(t-s^2)(t+s^2-2t^2)} ds \wedge dt,\\
    \omega_3 &= \frac{-(1+t)}{(t-s^2)(t+s^2-2t^2)} ds \wedge dt.
\end{align*}
We note that $\mathbf g=\{g_1, g_2\}$ does not generate $m_{P_5}=(s,t),$ showing that the intersection of the divisors is not transversal at $P_5$. Now
\begin{align*}
    s^2 &= \frac{2t-1}{2(1-t)} (g_1) + \frac{1}{2(1-t)}( g_2), \\ 
  t^2 &= \frac{t}{2(1-t)}(g_1) + \frac{t}{2(1-t)}(g_2),\\
 R_{P_5}(\{s,t\}, \mathbf{g}, 2) &= \frac{-t}{t-1 }.
 \end{align*}
 
 The code $C_\Omega(\mathcal D, \mathcal P,G)$ is the span of the image of the residue map  (\ref{eqn:naiveconst})  evaluated at $\mathcal P=\{P_1,\dots,P_5\}$ and is given by
 \begin{align*}
 Res_{(\mathcal{D},\mathcal{P},G)}: \Omega^2(D-G) &\rightarrow \mathbb{F}_9^5  \\
       \omega_1 &\mapsto (1,2, 2\alpha, \alpha, 0),\\
       \omega_2 &\mapsto (1,1,1,1,2),\\
       \omega_3 &\mapsto (1,2,0,0,0).
\end{align*}
(Note that the residue of $\omega_i$ at $P_5$ is the coefficient of $st$ in the expansion of $R_{P_5}(\{s,t\},\mathbf g,2)\omega_i$.)
On the other hand, the functional code $C_L(\mathcal{P}, G)$ is constructed as in example \ref{ex:subset} and is given by 
\begin{align*}
     C_L(\mathcal{P}, G) = span\{ (2,1, \alpha, 2\alpha, 0), (2,2,0,0,1), (2,2,1,1,0) \}.
\end{align*} 
Note that $C_\Omega(\mathcal D, \mathcal P,G)$ is not contained  in the dual of $C_L(\mathcal{P},G)$. Specifically, the images of $\omega_1$ and $\omega_2$ are contained in the dual of $C_L(\mathcal{P},G)$ but the image of $\omega_3$ is not. The reason for this is that because the intersection of $D_1$ and $D_2$ is not transversal at $P_5$, there are functions $f\in \mathcal L(D_1+D_2-G)$ such that
$$Res_{P_5} \begin{bmatrix} {f\omega_3} \\ {D_1, D_2} \end{bmatrix} \neq f(P_5)Res_{P_5} \begin{bmatrix} {\omega_3} \\ {D_1, D_2} \end{bmatrix}.$$  

We will see in the next section how one can get around this problem to get a differential code that lies in the dual of the functional code. 

\end{example}

\section{A new construction of higher dimensional differential codes} \label{sec:construction}

The examples of the last section show that in general, the  differential construction in \S \ref{sec:prelim_construction} does not yield a code contained in the dual of the corresponding functional code. This happens because the space of differential forms $\Omega^r(\sum_i D_i -G)$ for which we compute the residues is too big in the case that the $D_i$ do not intersect transversally at every point of $\cap_i D_i$ and/or when $\mathcal{P}$ is a proper subset of $\cap_iD_i$. 
We will now see how to fix that problem by restricting ourselves to differentials which vanish on a specified additional divisor, yielding a differential code that 
always lies in the dual of the corresponding functional code.

   As in Section \ref{sec:intro}, let $X$ be an $r$-dimensional smooth projective variety over $k$, $\mathcal P=\{P_1,\dots,P_n\}$ be an ordered finite set of $k$-rational points of $X$, $G$ a $k$-rational divisor of $X$ whose support is disjoint from  $\mathcal P$, $\mathcal{D} = \{D_l\},$  a set of $r$ properly intersecting effective $k$-rational divisors on $X$ containing $\mathcal{P}$, and let $D=\sum_l D_l$. Let $f_{ij}$ be local equations for $D_j$ at $P_i\in \cap_lD_l$, and $\mathbf f_i=\{f_{i1},\dots,f_{ir}\}$, which is a system of parameters at $P_i$. Fix  generators $\mathbf x_i = \{x_{ij}\}$ of $m_{P_i}$ for each $P_i$ and $a_i \in \mathbb{N}$ such that $x_{ij}^{a_i} \in (f_{i1}, f_{i2}, \dots, f_{ir})$ for every $j$.
 
 \begin{defn} \label{defn:rectifying}
 Let $\theta \in k(X)^*$ be regular at each point in $\cap_lD_l$, and for $1\leq i\leq n,$ let $\theta_i$ be its local power series expansion in $\mathbf x_i$ at $P_i$. We say that $\theta$ is $(\mathcal{D}, \mathcal{P})$-rectifying if it satisfies the following conditions locally at each point in $\cap_l D_l$:
   \begin{enumerate}
       \item  $R_{P_i}(\mathbf{x_i}, \mathbf{f_i}, a_i) \cdot \theta_i \equiv c_ix_{i1}^{a_i-1}\dots x_{ir}^{a_i-1} \pmod{x_{i1}^{a_i},\dots,x_{ir}^{a_i}}$, for some $c_i \in k,$ if $P_i \in \mathcal{P}.$
       
       \item  $R_{P_i}(\mathbf{x_i}, \mathbf{f_i}, a_i) \cdot \theta_i \equiv 0 \pmod{x_{i1}^{a_i},\dots,x_{ir}^{a_i}}$ if  $P_i \in \cap_l D_l - \mathcal{P}. $
   \end{enumerate}
  In the above definition, if $c_i \neq 0$ for every $P_i \in \mathcal{P}$, we say that $\theta$ is  strictly $(\mathcal{D}, \mathcal{P})$-rectifying.
   \end{defn}
   
   
   \begin{prop}
   The above definition is independent of the choices of $\mathbf{x_i}$, $\mathbf{f_i},$ and $a_i$ at $P_i$.
   \end{prop}
   \begin{proof}
For  $P \in \cap_lD_l$, let $(\mathbf{x}, \mathbf{f}, a)$ denote the triple corresponding to a choice of  local parameters $\mathbf{x}$, local equations  $\mathbf{f}$ for divisors in $\mathcal{D}$ and $a \in \mathbb{N}$ such that $x_i^{a} \in (f_1, f_2, \dots,  f_r)$.   Let $(\mathbf{y}, \mathbf{g}, b)$ be another triple corresponding to a different choice. It suffices to show that: \\
\begin{align*}
    R_P(\mathbf{x}, \mathbf{f}, a) \theta &= cx_1^{a-1}x_2^{a-1}\dots x_r^{a-1} ~ mod (x_1^a, x_2^a , \dots, x_r^a),\text{for some} ~c\in k, \\ \Leftrightarrow R_P(\mathbf{y}, \mathbf{g}, b) \theta &= dy_1^{b-1}y_2^{b-1}\dots y_r^{b-1} ~ mod (y_1^b, y_2^b, \dots, y_r^b),
    \text{for some} ~d\in k,
\end{align*}
 where $c=0$ if and only if $d=0$.\\

To show this, note that by (\ref{eqn:res'}) and (\ref{eqn:basechange}), for $n_i\geq 0$, the coefficient of $x_1^{a-1-n_1}x_2^{a-1-n_2} \dots x_r^{a-1-n_r}$ in the local expansion of  $R_P(\mathbf{x}, \mathbf{f}, a) \theta$ is equal to 
\begin{align*}
  Res_P \begin{bmatrix} R_P(\mathbf{x}, \mathbf{f}, a) \theta x_1^{n_1} x_2^{n_2} \dots x_r^{n_r} d\mathbf{x}\\ x_1^a, x_2^a, \dots, x_r^a \end{bmatrix} = Res_P \begin{bmatrix} \theta x_1^{n_1} x_2^{n_2} \dots x_r^{n_r} d\mathbf{x} \\ f_1, f_2, \dots, f_r \end{bmatrix}. 
\end{align*}

Hence the condition
\begin{align} \label{eqn:hypothesis}
    R_P(\mathbf{x}, \mathbf{f}, a) \theta = cx_1^{a-1}x_2^{a-1}\dots x_r^{a-1} ~ mod (x_1^a, x_2^a , \dots, x_r^a),
\end{align}
is equivalent to:
\begin{align*}
Res_P \begin{bmatrix} \theta x_1^{n_1} x_2^{n_2} \dots x_r^{n_r} d\mathbf{x}\\ f_1, f_2, \dots, f_r \end{bmatrix}   = \begin{cases}
 0 \text{~~if~~} n_i>0 \text{~~for some}~ i \\
c \text{~~otherwise,}
\end{cases}
\end{align*}
which  is again equivalent to
\begin{align*}
Res_P \begin{bmatrix} \theta z d\mathbf{x}\\ f_1, f_2, \dots, f_r \end{bmatrix}   = cz(P),
\end{align*}
for any $z\in \mathcal O_P$.

By symmetry, to prove the claim, it suffices to show for any $w\in \mathcal O_P$ that  
\begin{align} \label{eqn:equiv}
Res_P \begin{bmatrix} \theta w d\mathbf{y}\\ g_1, g_2, \dots, g_r \end{bmatrix}   = dw(P),
\end{align}
for some $d\in k$, and that $d =0 $ if and only if $c=0$. Let $\mathbf{y} = T \mathbf{x}$ and $\mathbf{f} = E \mathbf{g}$ for some $T, E \in GL_r( \mathcal{O}_P)$, so $\det{T}, \det{E}\in \mathcal{O}_P^*$.   Now by Remark \ref{rmk:welldefined}, we see that
\begin{align*}
    Res_P \begin{bmatrix} \theta w  d\mathbf{y}\\ g_1, g_2, \dots, g_r \end{bmatrix} &= Res_P \begin{bmatrix} \det{E} \cdot \theta w d\mathbf{y}\\ f_1, f_2, \dots, f_r \end{bmatrix} \\
    &= Res_P \begin{bmatrix} \det{E} \det{T} \cdot  \theta w  d\mathbf{x}\\ f_1, f_2, \dots, f_r \end{bmatrix}\\
    &=c \det{E}(P) \det{T}(P) w(P),
\end{align*}
which gives (\ref{eqn:equiv}) with
$d=c \det{E}(P) \det{T}(P)\in k$. Since $\det{T}, \det{E}\in \mathcal{O}_P^*$, $c=0$ if and only if $d=0$.
  \end{proof}
  
Note that in  the above proof we have also shown that:

\begin{cor} \label{cor:leadterm}
Suppose that $\theta$ is $(\mathcal D,\mathcal P)$ rectifying and that $P\in \cap_i D_i.$ Let $D_i$ be locally defined by $f_i$ at $P$, and set $\mathbf f=\{f_1,\dots,f_r\}$. Let $\mathbf x=\{x_i\}$ be local parameters at $P$, and suppose that $x_i^a\in(f_1,\dots,f_r)$ for all $i$.   Then for any $z\in \mathcal O_P$,
\begin{align*}
Res_P \begin{bmatrix} \theta z d\mathbf{x}\\ f_1, f_2, \dots, f_r \end{bmatrix}   = cz(P),
\end{align*}
where $c$ is the coefficient of $x_1^{a-1}x_2^{a-1}\dots x_r^{a-1}$ in the local expansion of $R_P(\mathbf{x}, \mathbf{f}, a) \theta$ at $P$.\\
Taking $z=1$ we get
\begin{align*}
c= Res_P \begin{bmatrix} \theta  d\mathbf{x}\\ f_1, f_2, \dots, f_r \end{bmatrix}.
\end{align*}
\end{cor}

Given a differential $\omega \in \Omega^r(X)$ and a point $P \in X$ with local parameters $\mathbf{x} = \{x_1, x_2, \dots, x_r\}$, let $\omega/d\mathbf{x}$ denote the rational function $w$ where $\omega = w d\mathbf{x}$ on an open neighborhood of $P$.
   
 The usefulness of  $(\mathcal D,\mathcal P)$-rectifying functions is the following:
 
\begin{prop}\label{prop:linearity}
Suppose that $\theta$ is $(\mathcal{D}, \mathcal{P})$-rectifying, and that $h\in k(X)$ is regular at every $P \in \cap_i D_i$. 
Then for any $\omega\in \Omega^r(X)$ such that $f_1f_2 \dots f_r\omega/\theta d \mathbf{x}$ is regular at  every $P\in \cap_i D_i$ (in particular when $\omega\in \Omega^r(D-G-(\theta)^+)$),
\begin{align*}
Res_{P} \begin{bmatrix} h \omega \\ {D_1, D_2, \dots, D_r} \end{bmatrix}&= h(P) Res_{P} \begin{bmatrix} \omega\\ {D_1, D_2, \dots, D_r} \end{bmatrix}.
\end{align*} 
Moreover for $P\notin \mathcal P$,
\begin{align*}
Res_{P} \begin{bmatrix} \omega \\ {D_1, D_2, \dots, D_r} \end{bmatrix}&= 0.
\end{align*}

\end{prop}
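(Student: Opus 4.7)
Fix $P_i \in \bigcap_l D_l$ and work locally. The key idea is that the vanishing imposed by $\omega \in \Omega^r(D - G - (\theta)^+)$ lets us factor $\theta$ out of $f_{i1}\cdots f_{ir}\omega$ in the regular local ring $\mathcal{O}_{P_i}$; then the $(\mathcal{D},\mathcal{P})$-rectifying congruence determines the residue via a one-line coefficient extraction. Both conclusions of the proposition will fall out of the same local computation.

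\textbf{Local factorization.} Since $\mathbf{x}_i$ is a system of regular parameters, $dx_{i1}\wedge\cdots\wedge dx_{ir}$ generates $\Omega^r$ near $P_i$, so write $\omega = g_i\,dx_{i1}\wedge\cdots\wedge dx_{ir}$ with $g_i\in k(X)$. Because $\theta$ is regular at $P_i$, the positive part $(\theta)^+$ coincides locally with the principal divisor of $\theta_i \in \mathcal{O}_{P_i}$, while $(f_{i1}\cdots f_{ir})$ equals $D$ locally. The inequality $(\omega) \geq -D + G + (\theta)^+$ then gives $v_E(f_{i1}\cdots f_{ir}\,g_i) \geq v_E(\theta_i)$ along every height-one prime $E \subset \mathcal{O}_{P_i}$ in the support of $\theta_i$. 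Since $\mathcal{O}_{P_i}$ is a regular, hence factorial, local ring, this divisorial vanishing upgrades to honest divisibility, yielding an $\eta_i \in \mathcal{O}_{P_i}$ with
\[
f_{i1}\cdots f_{ir}\,\omega \;=\; \theta_i\,\eta_i\,dx_{i1}\wedge\cdots\wedge dx_{ir}.
\]
Substituting this into the definitions and using Remark~\ref{rm:defnind} reduces the residue to the trace of the coefficient of $x_{i1}^{a_i-1}\cdots x_{ir}^{a_i-1}$ in the $\mathbf{x}_i$-expansion of $R_{P_i}(\mathbf{x}_i,\mathbf{f}_i,a_i)\,\theta_i\,\eta_i$.

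\textbf{Reading off the residue.} When $P_i \in \mathcal{P}$, the rectifying congruence
\[
R_{P_i}(\mathbf{x}_i,\mathbf{f}_i,a_i)\,\theta_i \;\equiv\; c_i\, x_{i1}^{a_i-1}\cdots x_{ir}^{a_i-1} \pmod{x_{i1}^{a_i},\ldots,x_{ir}^{a_i}}
\]
identifies this coefficient as $c_i\,\eta_i(P_i)$, so $Res_{P_i}\bigl[\omega\,/\,D_1,\ldots,D_r\bigr] = \mathrm{Tr}\bigl(c_i\,\eta_i(P_i)\bigr)$. The same computation applied to $h\omega$ just replaces $\eta_i$ by $h\eta_i$, giving $\mathrm{Tr}\bigl(c_i\,h(P_i)\,\eta_i(P_i)\bigr)$; since $P_i$ is $k$-rational the scalar $h(P_i)$ factors out of the trace, producing $h(P_i)\cdot Res_{P_i}\bigl[\omega\,/\,D_1,\ldots,D_r\bigr]$. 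For $P_i \in \bigcap_l D_l \setminus \mathcal{P}$, the rectifying condition instead forces $R_{P_i}(\mathbf{x}_i,\mathbf{f}_i,a_i)\,\theta_i \equiv 0$, so the extracted coefficient and hence the residue is $0$; this gives the second claim and makes the $h$-linearity at such points trivial.

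\textbf{Main obstacle.} The delicate step is the factorization: converting the global divisor inequality $(\omega) + D - G - (\theta)^+ \geq 0$ into an honest local divisibility $\theta_i \mid f_{i1}\cdots f_{ir}g_i$ in $\mathcal{O}_{P_i}$. This is where the regular-local-ring (UFD) structure does essential work, collapsing conditions over all height-one primes into a single membership in the principal ideal $(\theta_i)$; it also requires keeping track of the contribution of $G$ to ensure that $f_{i1}\cdots f_{ir}\omega$ is itself regular at $P_i$, so that $\eta_i$ genuinely lies in $\mathcal{O}_{P_i}$ rather than in its fraction field.
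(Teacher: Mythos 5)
Your proof is correct and follows essentially the same approach as the paper's: both localize at $P$, use the membership $\omega\in\Omega^r(D-G-(\theta)^+)$ to write $f_1\cdots f_r\omega = \theta\cdot(\text{regular form})$, and then feed the rectifying congruence for $R_P(\mathbf{x},\mathbf{f},a)\,\theta$ into the definition of the Grothendieck residue to extract the coefficient. The only difference is cosmetic: the paper states the divisibility as an assertion ("by assumption, $f_1\cdots f_r\omega/\theta$ is regular at $P$") and splits the resulting regular form into its constant and higher-order parts $u+v$, whereas you justify the same divisibility via the UFD structure of $\mathcal{O}_P$ and name the regular quotient $\eta_i$; the coefficient extraction then goes the same way, with $\eta_i(P_i)$ playing the role of the paper's $\kappa$.
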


\begin{proof}
Take $P\in \cap_iD_i$. By assumption, if $u$ is the function such that
$u d \mathbf{x}=f_1f_2 \dots f_r\omega/\theta,$ then $u$
is regular at $P$. Now by Corollary \ref{cor:leadterm},
\begin{align*}
Res_{P} \begin{bmatrix} h \omega \\ {D_1, D_2, \dots, D_r} \end{bmatrix}&= Res_{P} \begin{bmatrix} h f_1\dots f_r \omega \\ {f_1, f_2, \dots, f_r} \end{bmatrix}=Res_{P} \begin{bmatrix} h u \theta  d\mathbf{x} \\ {f_1, f_2, \dots, f_r}\end{bmatrix}\\
&= h(P)u(P)Res_{P} \begin{bmatrix} \theta  d\mathbf{x} \\ {f_1, f_2, \dots, f_r}\end{bmatrix}\\
&=h(P)Res_{P} \begin{bmatrix} \omega \\ {D_1, D_2, \dots, D_r} \end{bmatrix}.
\end{align*} 
 The proof of second assertion is similar and follows from  Corollary \ref{cor:leadterm} and the fact that for $P \in \cap_iD_i - \mathcal{P}$,  $$c= Res_{P} \begin{bmatrix} \theta  d\mathbf{x} \\ {f_1, f_2, \dots, f_r}\end{bmatrix} =0.$$
\end{proof}

We will need to see how our residues depend on the choices we have made.

\begin{remark}\label{rm:changeofeverything}
With notation as above,  for any $\omega \in \Omega^r(D-G-(\theta)^+)$  and  $u d \mathbf{x} = f_1f_2 \dots f_r\omega/\theta$  where $u$ is regular at  every $P\in \cap_i D_i$,  we have by Corollary \ref{cor:leadterm},
\begin{align}\label{rm:niceresformula}
Res_{P} \begin{bmatrix}  \omega \\ {D_1, D_2, \dots, D_r}
\end{bmatrix}=Res_{P} \begin{bmatrix} u \theta  d\mathbf{x} \\ {f_1, f_2, \dots, f_r}\end{bmatrix}=cu(P),
\end{align}
where $c=Res_{P} \begin{bmatrix}  \theta  d\mathbf{x} \\ {f_1, f_2, \dots, f_r}\end{bmatrix}.$

Now suppose that $G'$ is another $k$-rational divisor of $X$ whose support is disjoint from  $\mathcal P$, $\mathcal{D'} = \{D'_l\}$ is another set of $r$ properly intersecting effective $k$-rational divisors on $X$ containing $\mathcal{P}$ with local equations $f_i'$, that $\theta'$ is $(\mathcal D',\mathcal P)$-rectifying, and that $D-G-(\theta)^+\sim D'-G'-(\theta')^+,$ where $\sim$ denotes linear equivalence. Let $g\in k(X)^*$ be such that
$(g)=(D-G-(\theta)^+)-(D'-G'-(\theta')^+).$ 
Take $\omega\in \Omega^r(D-G-(\theta)^+)$, so
$g\omega\in \Omega^r(D'-G'-(\theta')^+)$.
Then if $u'=f_1'\dots f_r'g\omega/\theta' d{\mathbf x}$,
\begin{align*}
Res_{P} \begin{bmatrix}  g\omega \\ {D'_1, D'_2, \dots, D'_r}
\end{bmatrix}=c'u'(P),
\end{align*}
where $c'=Res_{P} \begin{bmatrix}  \theta'  d\mathbf{x} \\ {f'_1, f'_2, \dots, f'_r}\end{bmatrix}.$
Note that by assumption $gf'_1\dots f'_r\theta/f_1\dots f_r\theta'=u'/u$ 
is regular and non-vanishing at $P,$ and letting $\alpha_P$ be its value at $P,$ we have $u'(P)=\alpha_P u(P)$. Hence if $P\in\mathcal P,$ and $\theta$ and $\theta'$ are {\em strictly rectifying,} so $cc'\neq 0$, then
\begin{align}\label{eq:scalingfactor}
Res_{P} \begin{bmatrix}  g\omega \\ {D'_1, D'_2, \dots, D'_r}
\end{bmatrix}=\beta_P 
Res_{P} \begin{bmatrix}  \omega \\ {D_1, D_2, \dots, D_r}
\end{bmatrix},
\end{align}
where $\beta_P:=(c'/c)\alpha_P\neq 0$ is independent of $\omega$, but does depend on $\mathcal D, G, \theta$, and $\mathcal D', G',$ and $\theta'$. We call $\beta_P$ a {\em scaling factor at $P$}.
Note that 
for any $P\notin \mathcal P$, both residues in (\ref{eq:scalingfactor}) vanish  by Proposition \ref{prop:linearity}, so any $\beta_P\neq 0$ is a scaling factor at $P$.
\end{remark}
\subsection{Existence of rectifying functions}
We will show in a series of lemmas that $ (\mathcal{D}, \mathcal{P})$- (strictly) rectifying functions  exist. We will first show this locally at each point of $\mathcal P$ and then explain how to piece these local functions together.
   
\begin{lemma} \label{lem:local} Let $P\in X$, $\mathbf x=\{x_1,\dots,x_r\}$ be a regular system of parameters and $\mathbf f=\{f_1,\dots,f_r\}$ be any system of parameters at $P$  such that for some $a>0$ 
$x_i^a\in (f_1,\dots,f_r) $ for all $i$. Then there exists  $s_P\in \mathcal{O}_P$ such that
$$R_{P}(\mathbf{x}, \mathbf{f}, a)s_P\equiv x_1^{a-1}\dots x_r^{a-1} \pmod{x_1^{a},\dots,x_r^{a}}.$$

\end{lemma}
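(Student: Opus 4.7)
The plan is to work in the Artinian quotient $A := \mathcal{O}_P/I$, where $I := (x_1^a, \ldots, x_r^a)$, and exhibit $\sigma := \overline{x_1^{a-1} \cdots x_r^{a-1}}$ as an element of the principal ideal $(\overline{R_P}) \subseteq A$; any preimage in $\mathcal{O}_P$ of an element $\tau \in A$ with $\overline{R_P}\,\tau = \sigma$ will then serve as $s_P$. First I note that $\mathcal{O}_P$ is regular local of dimension $r$ and $x_1^a, \ldots, x_r^a$ is a regular sequence (being a system of parameters in a Cohen--Macaulay ring), so $A$ is a zero-dimensional local complete intersection, in particular a Gorenstein Artinian local ring with residue field $K = \mathcal{O}_P/\mathfrak{m}_P$. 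A direct computation with the monomial basis $\{x_1^{i_1}\cdots x_r^{i_r} : 0 \le i_j < a\}$ of $A$ over $K$ shows that the socle $\operatorname{soc}(A) = \operatorname{Ann}_A(\mathfrak{m}_A)$ is the one-dimensional $K$-line spanned by $\sigma$.

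The crux will be the identity
\[
(\overline{R_P}) \;=\; \operatorname{Ann}_A\bigl(\overline{(\mathbf{f})}\bigr) \;=\; \bigl(I:(\mathbf{f})\bigr)/I
\]
in $A$. The easy inclusion $\subseteq$ follows from Cramer's rule applied to the matrix equation $\mathbf{x}^a = R\mathbf{f}$: for each $j$, $(\det R)\,f_j = \sum_i (\operatorname{adj} R)_{ji}\,x_i^a \in I$. For the reverse inclusion I would appeal to Wiebe's theorem, sometimes attributed to Scheja--Storch (a close cousin of Lemma~7.2 of \cite{lipman_dualizing}, which the paper has already invoked for \eqref{detdiff}): if $\mathbf{y},\mathbf{z}$ are two regular sequences of length $r$ in a Noetherian local ring with $y_i = \sum_j c_{ij}z_j$, then $(\mathbf{y}):(\mathbf{z}) = (\det[c_{ij}]) + (\mathbf{y})$. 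Since both $\mathbf{x}^a$ and $\mathbf{f}$ are regular sequences in $\mathcal{O}_P$, the hypothesis applies.

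With that identity in hand, the lemma drops out: $\overline{(\mathbf{f})} \subseteq \mathfrak{m}_A$, hence
\[
\operatorname{Ann}_A\bigl(\overline{(\mathbf{f})}\bigr) \;\supseteq\; \operatorname{Ann}_A(\mathfrak{m}_A) \;=\; \operatorname{soc}(A) \;=\; K\sigma,
\]
so $\sigma \in (\overline{R_P})$, yielding the required $s_P$. The hard part will be the Wiebe/Scheja--Storch colon-ideal identity itself, because \eqref{detdiff} only captures the independence-of-choice part of the story, not the generation of the full colon ideal. A self-contained alternative is Koszul-theoretic: the change-of-basis $R$ induces a chain map $K_\bullet(\mathbf{x}^a) \to K_\bullet(\mathbf{f})$ of Koszul complexes whose top-degree component is multiplication by $\det R$, and Matlis-dualizing the surjection $A \twoheadrightarrow \mathcal{O}_P/(\mathbf{f})$ between Gorenstein Artinian local rings produces an injection $\mathcal{O}_P/(\mathbf{f}) \hookrightarrow A$ realized as $1 \mapsto \overline{R_P}$, showing $\overline{R_P} \neq 0$; one then concludes from the standard fact that every nonzero ideal in a Gorenstein Artinian local ring contains the socle.
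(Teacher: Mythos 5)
Your proof is correct, but it takes a genuinely different and substantially heavier route than the paper's. The paper's argument is purely a matrix manipulation: writing $f_j = \sum_\ell s_{j\ell}x_\ell$ and setting $s_P := \det[s_{j\ell}]$, it observes that $[r_{ij}][s_{j\ell}]$ and the diagonal matrix $\operatorname{diag}(x_1^{a-1},\ldots,x_r^{a-1})$ both express $\mathbf{x}^a$ in terms of $\mathbf{x}$, so by the already-established congruence \eqref{detdiff} (with $\mathbf f$ replaced by $\mathbf x$) their determinants agree modulo $(x_1^a,\ldots,x_r^a)$, which is exactly the claim. This is constructive, three lines long, and needs nothing beyond what Section~2 has on the table. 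Your argument instead moves to the Artinian quotient $A = \mathcal{O}_P/(\mathbf{x}^a)$, identifies its one-dimensional socle with $K\sigma$, and derives $\sigma \in (\overline{R_P})$ from the Gorenstein property together with either the Wiebe/Scheja--Storch colon-ideal formula $(\mathbf{x}^a):(\mathbf f) = (\det R, \mathbf{x}^a)$ or, as you note, the weaker statement $\overline{R_P}\neq 0$ obtained via a Koszul chain map and Matlis duality. This is valid --- the Cramer's rule inclusion, the socle computation (legitimate since $A$ completes to $K[[\mathbf x]]/(\mathbf{x}^a)$ by Cohen's structure theorem), and the final lift to $s_P$ all check out --- and it does illuminate \emph{why} the lemma holds: any nonzero ideal of a Gorenstein Artinian local ring catches the socle. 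What it costs is precisely what you flag yourself: you must import Wiebe's theorem or a Koszul/Matlis argument, neither of which is proved in the paper and both of which say strictly more than \eqref{detdiff}, whereas the paper's trick extracts the lemma from \eqref{detdiff} alone and hands you an explicit $s_P$ in the bargain.
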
 

\begin{proof}
  Recall that  $R_{P}(\mathbf{x}, \mathbf{f}, a) = \det{[r_{ij}]}$ where $x_i^a=\sum_{j=1}^rr_{ij}f_j$ with $r_{ij}\in \mathcal O_{P}$. 
 Write $f_j=\sum_{l=1}^r s_{jl}x_l$ with $s_{jl}\in\mathcal O_P$.
 Then 
 \begin{align*}
[r_{ij}][s_{jl}]\begin{pmatrix}x_1\\ \vdots \\ x_r\end{pmatrix} =\Delta \begin{pmatrix}x_1\\ \vdots \\ x_r\end{pmatrix} =\begin{pmatrix}x_1^a\\ \vdots \\ x_r^a\end{pmatrix},
 \end{align*}
 where $\Delta$ is the diagonal matrix with $x_i^{a-1}$ along the diagonal.  
 Since $\mathbf{x}$ is a system of parameters at $P$, we can apply (\ref{detdiff}) to get 
 $$\det{[r_{ij}]det[s_{jl}]}\equiv\det{\Delta}\pmod{x_1^{a},\dots,x_r^{a}}.$$
Taking $s_p = det[s_{jl}]$ yields the lemma.
 \end{proof}

  
  
  To piece together the above result at all of $\mathcal P$ we need two more lemmas. 
  
  \begin{lemma}\label{lem:affine} There exists an affine open subset $U$ of $X$ defined over $k$ such that
  $U$ contains $\cap_l D_l$ and so that for every $P_i \in \cap_l D_l$, $m_{P_i}$ is generated by some $ \mathbf{x_i} = \{x_{i1}, x_{i2}, \dots, x_{ir}\} \subseteq k[U]$.
  \end{lemma}
  \begin{proof}
   This follows from \cite[Proposition 3.3.36]{liu}.
  \end{proof}
  
\begin{lemma} \label{lem:rect}
  Let $\mathcal{Q} = \{Q_1, Q_2, \dots, Q_m\}$ be a finite set of $k$-rational points in a smooth affine $k$-variety $Spec~A$ of dimension $r$  and let $m_{Q_i}= (x_{i1}, x_{i2}, \dots, x_{ir}) \subseteq A$. Then given  $\theta_{i}\in \mathcal{O}_{Q_i}$ and  $a_i>0$, there exists  $\theta\in A$ such that  
  \begin{align*}
      \theta \equiv \theta_{i}\mod {(x_{i1}^{a_i}, x_{i2}^{a_i}, \dots, x_{ir}^{a_i})}.
  \end{align*}
  \end{lemma}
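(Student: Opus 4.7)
The plan is to recognize this as a geometric version of the Chinese Remainder Theorem. Set $J_i = (x_{i1}^{a_i}, \ldots, x_{ir}^{a_i}) \subseteq A$, which is a literal ideal of $A$ (not just of the local ring), and write $\mathfrak{m}_i$ for the maximal ideal of $A$ corresponding to $Q_i$. Write $I_i = J_i \mathcal{O}_{Q_i}$ for the extension, so that the desired congruence $\theta \equiv \theta_i \pmod{(x_{i1}^{a_i},\dots,x_{ir}^{a_i})}$ means exactly that $\theta - \theta_i \in I_i$ inside $\mathcal{O}_{Q_i}$.

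First I would verify two standard facts. \emph{Fact 1:} $J_i$ is $\mathfrak{m}_i$-primary, because $\mathfrak{m}_i^{r a_i} \subseteq J_i$ by pigeonhole on monomials in the $x_{ij}$. \emph{Fact 2:} for $i \ne j$ the ideals $J_i$ and $J_j$ are comaximal. Indeed, since $A$ is an affine $k$-algebra (hence a Jacobson ring), every proper ideal is contained in a maximal ideal; but the only maximal ideal containing the $\mathfrak{m}_i$-primary ideal $J_i$ is $\mathfrak{m}_i$, while similarly $J_j$ is only contained in $\mathfrak{m}_j$. Since $\mathfrak{m}_i \ne \mathfrak{m}_j$, no maximal ideal contains both, so $J_i+J_j=A$.

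Next, I would translate each $\theta_i \in \mathcal{O}_{Q_i}$ into an element of $A/J_i$. Because $J_i$ is $\mathfrak{m}_i$-primary, the ring $A/J_i$ is local with unique maximal ideal $\mathfrak{m}_i/J_i$, so localizing at $\mathfrak{m}_i$ has no effect: the natural map
\begin{equation*}
A/J_i \longrightarrow \mathcal{O}_{Q_i}/I_i = (A/J_i)_{\mathfrak{m}_i/J_i}
\end{equation*}
is an isomorphism. Thus the image $\bar\theta_i \in \mathcal{O}_{Q_i}/I_i$ pulls back to a well-defined element of $A/J_i$.

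Finally, by Fact 2 the classical Chinese Remainder Theorem gives a surjection $A \twoheadrightarrow \prod_{i=1}^m A/J_i$, so I can choose $\theta \in A$ whose image in each $A/J_i$ equals the preimage of $\bar\theta_i$. Unwinding the isomorphism of the previous step, $\theta \equiv \theta_i \pmod{I_i}$ in $\mathcal{O}_{Q_i}$ for every $i$, which is the required congruence. The only step that takes a moment to verify is the isomorphism $A/J_i \cong \mathcal{O}_{Q_i}/I_i$; everything else is automatic once the setup is in place.
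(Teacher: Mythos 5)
Your proof is correct and uses the same engine as the paper: the Chinese Remainder Theorem. The paper's version works with powers $m_{Q_i}^{\ell}$ (chosen so $m_{Q_i}^{\ell}\subseteq J_i$), observes that $m_{Q_i}^{\ell}$ and $\prod_{j\neq i}m_{Q_j}^{\ell}$ are comaximal, writes $1=\gamma_i+\delta_i$ accordingly, and sets $\theta=\sum_i\delta_i\theta_i$. You instead apply CRT directly to the ideals $J_i=(x_{i1}^{a_i},\dots,x_{ir}^{a_i})$ after checking pairwise comaximality. The two routes are essentially the same; the small thing you do differently, and do well, is to make explicit the isomorphism $A/J_i\cong\mathcal{O}_{Q_i}/I_i$ that lets you lift $\theta_i\in\mathcal{O}_{Q_i}$ to $A/J_i$ before applying CRT. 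The paper's one-line $\theta=\sum_i\delta_i\theta_i$ tacitly assumes the $\theta_i$ have been lifted to $A$ (otherwise $\theta$ need not lie in $A$), so your explicit treatment of that step is a worthwhile clarification rather than a detour. One minor remark: the appeal to the Jacobson property is unnecessary---every proper ideal in any commutative ring with $1$ is contained in a maximal ideal by Zorn's lemma, and the rest of your comaximality argument (that $\sqrt{J_i}=\mathfrak{m}_i$ is the unique prime over $J_i$) then carries through; but this does not affect correctness.
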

  \begin{proof} This is essentially the Chinese Remainder Theorem.
  Take $l=\max_i{(r(a_i-1)+1)}$, so that $m_{Q_i}^l \subseteq (x_{i1}^{a_i}, x_{i2}^{a_i}, \dots, x_{ir}^{a_i})$.
  The ideals  $m_{Q_i}^l$ and $\prod_{j\neq i} m_{Q_j}^l$ are comaximal. 
  Then for all $i$, there exists  $\gamma_i \in m_{Q_i}^l$ and $\delta_i \in \prod_{j\neq i} m_{Q_j}^l$ such that $\gamma_i + \delta_i =1$. Now it is easy to check that  
  \begin{align*}
      \theta = \sum_{i=1}^r \delta_{i}\theta_i
  \end{align*}
  satisfies the requirements of the Lemma.
  \end{proof}

With $s_{P_i}$ as in Lemma \ref{lem:local} and $c_i \in k$ as in Definition \ref{defn:rectifying},  set $\theta_i=c_is_{P_i}$ if $P_i\in\mathcal P$ and  $\theta_i$ to be $s_{P_i}$ times any element of $m_{P_i}$ if $P_i\in\cap_l D_l-\mathcal P$. Then taking $U$ as in Lemma \ref{lem:affine}, $A = k[U],$ and $Q_i=P_i$ as in Lemma \ref{lem:rect}, we get 
\begin{cor} \label{cor:theta}
For any set $\mathcal{D} = \{D_1, D_2, \dots, D_r\}$ of properly intersecting divisors in $X$ and any $\mathcal{P} \subseteq \cap_l D_l$, 
$(\mathcal{D}, \mathcal{P})$- rectifying functions  as well as  $(\mathcal{D}, \mathcal{P})$- strictly rectifying functions exist.
\end{cor}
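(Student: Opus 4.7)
The plan is to package the three preceding lemmas into a global existence statement, with essentially no new content beyond careful bookkeeping. First, use Lemma \ref{affine} to fix a single affine open $U \subseteq X$ defined over $k$ that contains $\cap_l D_l$ and such that for every $P_i \in \cap_l D_l$ the maximal ideal $m_{P_i}$ is generated by a regular system of parameters $\mathbf{x_i} = \{x_{i1},\dots,x_{ir}\} \subseteq A := k[U]$. Since $\mathbf{f_i} = \{f_{i1},\dots,f_{ir}\}$ is a system of parameters at $P_i$, there exists $a_i > 0$ with $x_{ij}^{a_i} \in (f_{i1},\dots,f_{ir})$ for all $j$, so Lemma \ref{lem:local} supplies $s_{P_i} \in \mathcal{O}_{P_i}$ with
\[
R_{P_i}(\mathbf{x_i}, \mathbf{f_i}, a_i)\, s_{P_i} \equiv x_{i1}^{a_i-1}\cdots x_{ir}^{a_i-1} \pmod{(x_{i1}^{a_i},\dots,x_{ir}^{a_i})}.
\]

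Next, build the target local data $\theta_i \in \mathcal{O}_{P_i}$. For $P_i \in \mathcal P$, set $\theta_i = s_{P_i}$, so the displayed congruence above becomes precisely condition (1) of the rectifying definition with $c_i = 1$. For $P_i \in \cap_l D_l - \mathcal P$, choose any nonzero $g_i \in m_{P_i}$ and set $\theta_i = s_{P_i} g_i$; then multiplying the lemma's congruence by $g_i$ gives
\[
R_{P_i}(\mathbf{x_i}, \mathbf{f_i}, a_i)\, \theta_i \equiv x_{i1}^{a_i-1}\cdots x_{ir}^{a_i-1} g_i \pmod{(x_{i1}^{a_i},\dots,x_{ir}^{a_i})},
\]
and expanding $g_i = \sum_j b_j x_{ij}$ with $b_j \in \mathcal{O}_{P_i}$ shows the right-hand side lies in $(x_{i1}^{a_i},\dots,x_{ir}^{a_i})$, giving condition (2).

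Now apply Lemma \ref{lem:rect} with $\mathcal Q = \cap_l D_l$ and the chosen local $\theta_i$ to produce a single $\theta \in A$ satisfying $\theta \equiv \theta_i \pmod{(x_{i1}^{a_i},\dots,x_{ir}^{a_i})}$ at each $P_i$. Since $\theta \in k[U]$, it is regular on $U$ and therefore at every point of $\cap_l D_l$. Multiplying these congruences by $R_{P_i}(\mathbf{x_i},\mathbf{f_i},a_i) \in \mathcal{O}_{P_i}$ transfers conditions (1) and (2) from $\theta_i$ to $\theta$. Finally, Remark \ref{rm:defnind} guarantees that the verification does not depend on the particular choice of relations expressing $x_{ij}^{a_i}$ in terms of $\mathbf f_i$, or on the choices of $a_i$, $\mathbf{x_i}$, and $\mathbf{f_i}$ made above.

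The main (minor) obstacle is just the consistency check that the rectifying congruences, which are conditions inside $\mathcal{O}_{P_i}$, are preserved under replacing the local data $\theta_i \in \mathcal{O}_{P_i}$ by the global $\theta \in A$: this is automatic because the natural map $A \hookrightarrow \mathcal{O}_{P_i}$ descends to an isomorphism modulo any power of the maximal ideal, so a congruence mod $(x_{i1}^{a_i},\dots,x_{ir}^{a_i})$ in $A$ is the same as one in $\mathcal{O}_{P_i}$. With this in hand, $\theta$ is $(\mathcal{D},\mathcal{P})$-rectifying, completing the proof. (As a bonus, the construction shows one may in fact arrange all $c_i = 1$, so this $\theta$ is actually strictly rectifying, but the corollary only asserts the weaker conclusion.)
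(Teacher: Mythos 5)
Your proof is correct and follows essentially the same route as the paper: it assembles Lemmas \ref{lem:local}, \ref{affine}, and \ref{lem:rect} in exactly the way the authors do, constructing local targets $\theta_i = s_{P_i}$ at points of $\mathcal{P}$ and $\theta_i = s_{P_i}\cdot(\text{element of } m_{P_i})$ at the remaining intersection points, then gluing via the Chinese Remainder argument. Your extra remarks (the explicit check that multiplying by $g_i \in m_{P_i}$ forces condition (2), and the observation that one can arrange $c_i = 1$ so the resulting $\theta$ is in fact strictly rectifying) are correct and make explicit what the paper leaves implicit.
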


\begin{remark}\label{rm:poonen} To make use of this, we will also need the fact that given any finite set $\mathcal P\subset X(k)$, there exists
a set $\mathcal D=\{D_1,\dots,D_r\}$ of properly intersecting effective divisors over $k$ whose intersection contains $\mathcal P$. This follows via an induction argument on \cite[Theorem 3.3]{poonen}.
\end{remark}

\subsection{Rectified Differential Codes}
With this we can now give our construction of rectified differential codes:

\begin{defn}
(Rectified Differential Codes) Let $X$ be an $r$-dimensional smooth projective variety over  $k$,
$\mathcal P=\{P_1,\dots,P_n\}$ be an ordered finite set of $k$-rational points of $X$, and $G$ a $k$-rational divisor of $X$ whose support is disjoint from  $\mathcal P$. Let 
 $\mathcal{D} = \{D_1, D_2, \dots, D_r\}$ be a set of properly intersecting effective divisors over $k$ containing $\mathcal{P}$ (see Remark \ref{rm:poonen}). Set $D=\sum_{i=1}^rD_i$. Let $\theta$ be a $(\mathcal D,\mathcal P)$-rectifying function.  Consider the residue map
\begin{align}\label{eqn:correctdefn}
    Res_{(\mathcal{D},\mathcal{P}, \theta, G)}: \Omega^r(D-G - (\theta)^+) &\rightarrow k^n  \\
       \omega &\mapsto \big (Res_{P_1} \begin{bmatrix} {\omega} \\ {D_1, D_2, \dots, D_r} \end{bmatrix}, \dots, Res_{P_n} \begin{bmatrix} {\omega} \\ {D_1, D_2, \dots, D_r} \end{bmatrix} \big ). \nonumber
\end{align}
We call the image under the above residue map a rectified differential code associated to $(\mathcal{P}, G)$  and denote it by $C_{\Omega}(\mathcal{D},\mathcal{P}, \theta, G)$. If in the above definition, $\theta$ is strictly $(\mathcal{D}, \mathcal{P})$-rectifying, we call the code a strictly rectified differential code associated to $(\mathcal{P}, G)$. We emphasize that a rectifying function $\theta$ is strictly rectifying by adding a superscript $s$ and denoting it as $\theta^s$.   
\end{defn}

\begin{remark} \label{rmk:transrect}
Note that if $\mathcal{P} = \cap_l D_l$ and the divisors in $\mathcal{D}$ intersect transversally at every point of $\mathcal P$, any non-zero constant function is (strictly) 
$(\mathcal{D}, \mathcal{P})$-rectifying and the above construction of rectified differential codes coincides with the construction in \cite{massman}, \cite{alain}, and (\ref{eqn:naiveconst}). This is always the case when $X$ is a curve, whereby we recover Goppa's construction.
\end{remark}

\begin{remark}\label{rm:equivalence}
Suppose $G'$ is another $k$-rational divisor of $X$ whose support is disjoint from  $\mathcal P$, $\mathcal{D}' = \{D'_1, D'_2, \dots, D'_r\}$ is another set of $r$ properly intersecting effective $k$-rational divisors on $X$ containing $\mathcal{P}$, that $\theta'$ is $(\mathcal D',\mathcal P)$-rectifying, and that $\sum D_l-G-(\theta)^+\sim \sum D'_l-G'-(\theta')^+.$ 
By Remark \ref{rm:changeofeverything}, if $\theta$ and $\theta'$ are strictly rectifying, 
$C_{\Omega}(\mathcal{D'},\mathcal{P}, {\theta^s}', G')$  is
obtained from $C_{\Omega}(\mathcal{D},\mathcal{P}, \theta^s, G)$ by multiplying by an invertible diagonal matrix whose diagonal entries are scaling factors at the points of $\mathcal P$. In particular,  the codes  $C_{\Omega}(\mathcal{D'},\mathcal{P}, \theta', G')$  
and $C_{\Omega}(\mathcal{D},\mathcal{P}, \theta, G)$  are equivalent.
\end{remark}

We will now investigate the properties of rectified differential codes.


\section{Properties of rectified differential codes} \label{sec:properties}
The notation $X,\mathcal{P}, G, \mathcal{D}$, $D$ and $\theta$ is as before. In this section we show 
that rectified differential codes on higher dimensional varieties share some of the properties of differential codes on curves discussed in Section \ref{sec:intro}.
\subsection{Strictly rectified differential codes are functional}
We say that a strictly rectified differential code is functional if it  can be obtained via the  functional construction given by (\ref{eqn:functional}) on the same variety and the same set of points. We begin with the following lemma. 
\begin{lemma}\label{lem:localeqn}
There exists an open subset $U\subset X$  containing $\cap_i D_i$ such that each $D_i$ is defined locally on $U$
by some $f_i \in k[U]$.
\end{lemma}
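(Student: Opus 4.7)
The plan is to start with the affine open $U_0$ from Lemma \ref{affine}, let $A = k[U_0]$, and then shrink $U_0$ to a basic open $(U_0)_g$ on which each invertible sheaf $\mathcal O(D_i)$ becomes trivial; triviality of $\mathcal O(D_i)$ on an open is precisely equivalent to $D_i$ being cut out there by a single regular function. So after Lemma \ref{affine} gives us an affine $U_0$ over $k$ containing $\mathcal Q:=\cap_l D_l$, for each $l$ I would view $\mathcal O(D_l)|_{U_0}$ as a rank-one projective $A$-module $M_l$ (using smoothness of $X$, which makes each $D_l$ Cartier), whose stalk $(M_l)_{m_{P_j}}$ is free of rank one at each $P_j\in\mathcal Q$.

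Next I would build, for each $l$, a single global element of $M_l$ that generates every stalk at $\mathcal Q$ simultaneously. For each pair $(l,j)$, clearing denominators of a stalk generator yields $a_{l,j}\in M_l$ whose image in $(M_l)_{m_{P_j}}$ generates it. Applying Lemma \ref{lem:rect} with $a_j=1$ produces $\delta_j\in A$ with $\delta_j(P_j)=1$ and $\delta_j(P_{j'})=0$ for $j'\neq j$, and I set $a_l:=\sum_j\delta_j a_{l,j}$. The image of $a_l$ in the residue field at $P_j$ is $\delta_j(P_j)\,\bar a_{l,j}=\bar a_{l,j}\neq 0$, so by Nakayama $a_l$ generates each stalk $(M_l)_{m_{P_j}}$.

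Now consider the ideal $J_l:=\operatorname{Ann}_A(M_l/Aa_l)$. Since $(M_l/Aa_l)_{m_{P_j}}=0$ for every $j$, we have $J_l\not\subseteq m_{P_j}$ for any $P_j\in\mathcal Q$; prime avoidance then yields $g_l\in J_l$ with $g_l(P_j)\neq 0$ for every $P_j\in\mathcal Q$. Then $g_l M_l\subseteq Aa_l$, hence $M_l[g_l^{-1}]=A[g_l^{-1}]\cdot a_l$, and since $a_l$ is non-torsion in the rank-one projective $M_l$ this module is actually free of rank one. Setting $g:=\prod_l g_l$ and $U:=(U_0)_g$, one obtains an affine open over $k$ containing $\mathcal Q$ on which every $\mathcal O(D_l)$ is trivial; transporting the canonical section of $\mathcal O(D_l)$ through the trivialization gives $f_l\in k[U]$ with $D_l|_U=V(f_l)$.

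The delicate point is Step 2--3 above: over the finite field $k=\mathbb F_q$ one cannot invoke a generic-position argument to combine the local generators $a_{l,j}$ into a common generator, so the idempotent-like cutoff functions $\delta_j$ supplied by the Chinese-remainder Lemma \ref{lem:rect} are what makes the construction work. Once a single global generator of every stalk is in hand, prime avoidance and the basic-open trick reduce the remaining work to a routine check that a rank-one projective module generated by a single non-torsion element is free.
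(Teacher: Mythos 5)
Your proof is correct, but it proceeds by a genuinely different route than the paper's. The paper reduces to the case of prime $D_i$, picks a uniformizer $f_i'$ for the discrete valuation ring $\mathcal{O}_{D_i}$ so that $(f_i') = D_i + E_i$, and then cites the moving lemma \cite[Theorem III.1.3.1]{shafa} to replace $E_i$ by a linearly equivalent $E_i'$ whose support misses $\cap_l D_l$; the open set $U$ is then the complement of $\cup_i \operatorname{supp}(E_i')$. Your argument instead stays inside a single affine chart $U_0 \supseteq \cap_l D_l$ and works directly with the rank-one projective modules $M_l = \Gamma(U_0, \mathcal{O}(D_l))$: you build a simultaneous stalk generator $a_l$ at all points of $\cap_l D_l$ via the cutoff functions $\delta_j$ from Lemma \ref{lem:rect} and Nakayama, and then invert an element of $\operatorname{Ann}_A(M_l/Aa_l)$ chosen by prime avoidance to trivialize $M_l$. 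In effect you re-derive exactly the piece of the moving lemma that is needed, by commutative-algebra means that are already present in the paper (Lemma \ref{lem:rect}), whereas the paper imports the moving lemma wholesale. Your version is more self-contained and avoids the preliminary reduction to prime divisors (which the paper glosses over), at the cost of invoking projective modules and prime avoidance; the paper's version is shorter on the page because it leans on standard citations. One cosmetic note: what you trivialize is most naturally $\mathcal{O}(-D_l)$ (the ideal sheaf), whose generator on $U$ is literally the local equation $f_l \in k[U]$; with $\mathcal{O}(D_l)$ you need the extra remark that the reciprocal of a generator is regular because $D_l$ is effective and $X$ is normal — which you do implicitly via the ``canonical section'' remark, so no gap, just worth stating.
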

\begin{proof}
One can  reduce to the case where the $D_i$ are  prime divisors. Choose 
a local equation $f_i'$  over $k$ for $D_i$.
Then $(f_i') = D_i + E_i$ for some $E_i$ whose support does not contain $D_i$. By a moving lemma \cite[Theorem III.1.3.1]{shafa}, one can find a rational functions $g_i \in k(X) $ such that $E_i' =  E_i + (g_i)$ contains no points from $\cap_i D_i$. Then $f_i = g_if_i'$ defines the divisor $D_i$ in the open set $U_i = X- supp(E_i')$. Now take $U = \cap_i U_i$.
\end{proof}

\begin{lemma}\label{lem:eta}
With  notation as above, given any strictly $(\mathcal{D}, \mathcal{P})$-rectifying function $\theta^s$, there is an open set $V\subseteq U$  containing $\mathcal{P}$ and a $k$-rational, $r$-differential $\eta$,
such that
\begin{enumerate}[(a)]
\item The divisor of $\eta$ restricted to $V$ is given by 
$
(\eta)|_{V} = -\sum_l (D_l\cap V)
$
and
\item  For all $P\in\mathcal P$, $Res_{P} \begin{bmatrix}  \theta^s \cdot \eta \\ {D_1, D_2, \dots, D_r} \end{bmatrix} = 1$. 
\end{enumerate}
\end{lemma}

\begin{proof}
For $U$ and $f_i$ as in Lemma \ref{lem:localeqn}, set $\mathbf{f}= \{f_1, f_2, \dots, f_r\}$. Fix generators $ \mathbf{x}_i = \{x_{ij}\}$ of $m_{P_i}$ in $k[U]$ for each $P_i \in \mathcal{P}$ and $a_i \in \mathbb{N}$ such that $x_{ij}^{a_i} \in (f_{1}, f_{2}, \dots, f_{r})$ for every $i$ and $j$.  Let $\eta_0$ be a $k$-rational differential $r$-form with no zeros or poles in an open neighborhood $V' \subseteq U$ of $\mathcal{P}$ (such a form exists by the moving lemma \cite[Theorem III.1.3.1]{shafa}). Then $\eta_0$ is given locally at $P_i$ by $h_i d\mathbf{x_i}$ for some $h_i \in \mathcal{O}_{P_i}$ with $h_i(P_i) \neq 0$.\\
 \indent As in Definition \ref{defn:rectifying}, let $c_i$ be the coefficient of $(x_{i1}x_{i2}\dots x_{ir})^{a_i-1}$  in the power series expansion $R_{P_i}(\mathbf{x_i}, \mathbf{f_i}, a_i) \cdot \theta^s$ in $\mathbf x_i$.  Since $\theta^s$ is strictly $(\mathcal{D}, \mathcal{P})$-rectifying, $c_i \neq 0$. Now by Lemma \ref{lem:affine} and Lemma \ref{lem:rect}, we can find $g \in k(X)$ regular at each $P_i$ such that 
 \begin{align*}
 g(P_i)=c_i^{-1}h_i(P_i)^{-1}
 \end{align*}
Note that $g$ is non-vanishing at each point in $\mathcal{P}$. Let $V = V' \cap (X- supp((g)))$. Now setting
 \begin{align*}
     \eta = \frac{g}{f_1f_2\dots f_r} \eta_0
 \end{align*}
and applying (\ref{rm:niceresformula}) yields the lemma.
\end{proof}

\begin{thm} \label{thm:df}
A strictly rectified differential code $C_{\Omega}(\mathcal{D},\mathcal{P}, \theta^s, G)$ is functional. In particular,
\begin{align*}
C_{\Omega}(\mathcal{D},\mathcal{P}, \theta^s, G) = C_L(\mathcal{P}, D - G + (\eta) - (\theta^s)^{-})
\end{align*}
where $\eta$ is as in Lemma \ref{lem:eta}
\end{thm}
\begin{proof}

Consider the map 
 \begin{align*}
    \phi:  L(D - G + (\eta) - (\theta^s)^{-}) &\rightarrow \Omega^r(D - G - (\theta^s)^+),\\
     f &\rightarrow f\cdot \theta^s \cdot \eta.
 \end{align*}
Note that any $ f \in L(D - G + (\eta) - (\theta^s)^{-})$ is regular at $\mathcal{P}$ by Lemma \ref{lem:eta}(a). Now the  map $\phi$ is an isomorphism of $k$-vector spaces (injectivity is clear; for surjectivity  use the fact that $\Omega^r(X)$ is one dimensional over $k(X)$).
By definition, $C_{\Omega}(\mathcal{D},\mathcal{P}, \theta^s, G )$ is the image of the residue map of (\ref{eqn:correctdefn}). Computing the residue at $P_i \in \mathcal{P}$ using Proposition \ref{prop:linearity} and Lemma \ref{lem:eta}(b) we get
\begin{align*}
    Res_{P_i} \begin{bmatrix} f\cdot \theta^s \cdot \eta \\ {D_1, D_2, \dots, D_r} \end{bmatrix} =
    f(P_i)Res_{P_i} \begin{bmatrix} \theta^s \cdot \eta \\ {D_1, D_2, \dots, D_r} \end{bmatrix}
  = f(P_i).
\end{align*}
Hence we get $C_{\Omega}(\mathcal{D},\mathcal{P},\theta^s, G) = C_L(\mathcal{P}, D - G + (\eta) - (\theta^s)^{-})$.
\end{proof}

\subsection{Functional codes are strictly rectified differential codes}
We show that any functional code on $X$  is a strictly rectified differential code, i.e,  it can be realized as a strictly rectified differential code on $X$ using the same set of points. 
\begin{thm}\label{thm:fd}
A functional code $C_L(\mathcal P, G)$ is
a strictly-rectified differential code on $\mathcal P$. In particular, given an ordered set $\mathcal P=\{P_1,\dots,P_n\}\subseteq X(k)$, and a $k$-rational divisor $G$ whose support is disjoint from $\mathcal P$, there is a set $\mathcal{D} = \{D_1, D_2, \dots, D_r\}$ of properly intersecting divisors over $k$ with $\mathcal{P} \subseteq \cap_i D_i$ and a strictly $(\mathcal{D}, \mathcal{P})$-rectifying function $\theta^s$, such that
\begin{align*}
    C_L (\mathcal{P}, G) = C_{\Omega}(\mathcal{D}, \mathcal{P}, \theta^s, D + (\eta) -G - (\theta^s)^{-}),
\end{align*}
where $D=\sum_i D_i$ and $\eta$ is as in Lemma \ref{lem:eta}. 
\end{thm}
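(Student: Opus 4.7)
The plan is to run Theorem \ref{thm:df}(b) in reverse: we manufacture data $(\mathcal{D},\theta^s,G')$ so that the divisor $\sum_\ell D_\ell-G'+(\eta)-(\theta^s)^-$ produced by that theorem equals the given $G$. First, by Remark \ref{rm:poonen}, choose a set $\mathcal{D}=\{D_1,\ldots,D_r\}$ of $k$-rational, properly intersecting effective divisors whose intersection contains $\mathcal{P}$. Second, apply Corollary \ref{cor:theta} with the explicit choice $\theta_i=s_{P_i}$ of Lemma \ref{lem:local} at each $P_i\in\mathcal{P}$ to obtain a strictly $(\mathcal{D},\mathcal{P})$-rectifying function $\theta^s$. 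Third, let $\eta$ be the $r$-differential constructed in the proof of Theorem \ref{thm:df}, so that $Res_{P}\bigl[\theta^s\eta/D_1,\ldots,D_r\bigr]=1$ for every $P\in\mathcal{P}$.

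Now set $G':=\sum_\ell D_\ell+(\eta)-G-(\theta^s)^-$. A direct substitution gives
\[
\sum_\ell D_\ell-G'+(\eta)-(\theta^s)^-=G,
\]
so once the hypotheses of Theorem \ref{thm:df}(b) are verified for the quadruple $(\mathcal{D},\mathcal{P},\theta^s,G')$, that theorem yields
\[
C_\Omega(\mathcal{D},\mathcal{P},\theta^s,G')=C_L(\mathcal{P},G),
\]
which is the claim (reading the slightly sloppy ``$\sum D_i$'' in the statement as a stand-in for $\mathcal{D}$).

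The one point requiring verification is that $G'$ has support disjoint from $\mathcal{P}$, which is needed both for the functional construction on the right and for the definition of $\Omega^r(\sum_\ell D_\ell-G'-(\theta^s)^+)$ on the left. This reduces to examining each summand of $G'$ in a neighborhood of $\mathcal{P}$. On the open $V\subseteq U$ of Theorem \ref{thm:df}, one has $\eta=g\eta_0/(f_1\cdots f_r)$ with each $f_\ell$ a local equation for $D_\ell$; hence $\sum_\ell D_\ell+(\eta)=(g)+(\eta_0)$ on $V$, and by construction both $g$ and $\eta_0$ are regular and non-vanishing at every point of $\mathcal{P}$, so this part of $G'$ avoids $\mathcal{P}$. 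The divisor $G$ avoids $\mathcal{P}$ by hypothesis, and $(\theta^s)^-$ avoids $\mathcal{P}$ because $\theta^s$ is regular there. Thus $G'$ is disjoint from $\mathcal{P}$, and the argument closes.

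The conceptual heart of the argument is Theorem \ref{thm:df}, together with the flexibility provided by Remark \ref{rm:poonen} and Corollary \ref{cor:theta}; the present theorem is then a matter of solving a linear equation for $G'$ and confirming the answer is admissible. The only step that might have been a genuine obstacle, producing $\mathcal{D}$ over $k$ with $\mathcal{P}\subseteq\bigcap_\ell D_\ell$, has already been offloaded to Poonen's Bertini theorems via Remark \ref{rm:poonen}, so the remainder is essentially bookkeeping around the disjointness condition.
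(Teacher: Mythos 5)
Your proof is correct and follows exactly the paper's route: choose $\mathcal{D}$ via Remark \ref{rm:poonen}, choose $\theta^s$ via Corollary \ref{cor:theta}, and then apply Theorem \ref{thm:df} with $G$ replaced by $G'=\sum_\ell D_\ell+(\eta)-G-(\theta^s)^-$. You go slightly further than the paper by spelling out the verification that $\operatorname{Supp}(G')\cap\mathcal{P}=\emptyset$ — a hypothesis the paper implicitly uses when re-invoking Theorem \ref{thm:df} but does not explicitly check — and your argument for it (on $V$ one has $\sum_\ell D_\ell+(\eta)=(g)+(\eta_0)$ with $g,\eta_0$ regular and non-vanishing at $\mathcal{P}$) is sound.
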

\begin{proof}
Given $\mathcal P,$ Remark \ref{rm:poonen} shows that such a $\mathcal D$ exists.
Now by Corollary \ref{cor:theta} there exists a strictly $(\mathcal{D}, \mathcal{P})$-rectifying  function $\theta^s$. 

The result now follows from Theorem \ref{thm:df} by replacing $G$ with $D + (\eta) -G - (\theta^s)^{-}$.
\end{proof}

\subsection{Behavior with respect to taking products}

We will now see that the differential construction (\ref{eqn:correctdefn}) behaves well under taking products of varieties.  

Let $X$ and $Y$ be smooth projective varieties over $k$ of dimensions $r$ and $s$, and suppose that $\mathcal D=\{D_1,\dots,D_r\}$ and $\mathcal E=\{E_1,\dots,E_s\}$ are sets of divisors on $X$ and $Y$ that respectively intersect properly at ordered sets of points $\mathcal P=\{P_1,\dots,P_n\}$ and
$\mathcal Q=\{Q_1,\dots,Q_m\}$. Set $D=\sum_{i=1}^r D_i$ and $E=\sum_{j=1}^s E_j$. Let $X\times Y$ be the product variety of $X$ and $Y$ with projections $pr_X$ and $pr_Y$ onto each factor. We will denote by $pr^*_X$ and $pr^*_Y$, the corresponding pullback functors. Let $\mathcal D\times Y=\{D_i\times Y|D_i\in\mathcal D\}$ and
$X\times \mathcal E=\{X\times E_j|E_j\in\mathcal E\}.$
Then $\mathcal D\times Y\cup X\times \mathcal E$
intersect properly on $X\times Y$ at $\mathcal P\times \mathcal Q$
(which we think of as $\{P_i\times Q_j\}$ ordered lexicographically.)



Now let $G$ be a divisor on $X$ whose support is disjoint from $\mathcal P$ and $H$ a divisor on $Y$ whose support is disjoint from $\mathcal Q$. 

\begin{thm}\label{thm:prod} Suppose that $\theta$ is $(\mathcal D,\mathcal P)$-rectifying on $X$ and that $\lambda$ is $(\mathcal E,\mathcal Q)$-rectifying on $Y$. Let $\mu=pr_X^*(\theta)pr_Y^*(\lambda)$.

a) Let $\omega \in \Omega^r(D - G - (\theta)^{+})$ and
$\chi \in \Omega^s(E - H - (\lambda)^{+})$. Then
if $\psi=pr_X^*(\omega)\wedge pr_Y^*(\chi),$ we have
$$\psi \in \Omega^{r+s}(D\times Y+X\times E- G\times Y-X\times H - (\mu)^{+}).$$

b) 
$\mu$ is $(\mathcal D\times Y \cup X\times \mathcal E,\mathcal P\times \mathcal Q)$-rectifying on $X\times Y$.
Moreover, if $\theta$ and $\lambda$ are strictly rectifying, then so is $\mu.$

c) For $P\in\mathcal P$, $Q\in\mathcal Q$, we have,
$$Res_{P\times Q} \begin{bmatrix} \psi \\ {D_1\times Y,\dots, D_r\times Y, X\times E_1,\dots,X\times E_s} \end{bmatrix}= Res_{P} \begin{bmatrix} \omega\\ {D_1, \dots, D_r} \end{bmatrix} Res_{Q} \begin{bmatrix} \chi\\ {E_1, \dots, E_s} \end{bmatrix}.$$

d) $C_{\Omega}(\mathcal{D}\times Y \cup X\times\mathcal E,\mathcal P \times\mathcal Q, \mu, G\times Y+X\times H)=C_{\Omega}(\mathcal{D},\mathcal{P}, \theta, G)\otimes_k C_{\Omega}(\mathcal{E},\mathcal{Q}, \lambda, H).$

\end{thm}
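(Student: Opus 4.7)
The plan is to establish parts (a), (b), (c) by local computations that exploit the block-diagonal structure imposed on $X\times Y$ by pullbacks from the two factors, and then to deduce (d) from (c) together with a K\"unneth-type identification of global sections on $X\times Y$. At each product point $P\times Q$, local data from $P\in X$ and $Q\in Y$ combine in the simplest possible way, and this is what makes the whole argument go through.

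For (a), the key input is the canonical isomorphism $\Omega^{r+s}_{X\times Y}\cong pr_X^*\Omega^r_X\otimes pr_Y^*\Omega^s_Y$. Because the prime components of $pr_X^*(\theta)$ are of the form $Z\times Y$ while those of $pr_Y^*(\lambda)$ are of the form $X\times W$, the two pullback divisors share no common components, so $(\mu)=pr_X^*(\theta)+pr_Y^*(\lambda)$ splits cleanly into positive and negative parts: $(\mu)^{+}=pr_X^*((\theta)^{+})+pr_Y^*((\lambda)^{+})$. Tensoring the two sheaves $\Omega^r_X(\sum D_i-G-(\theta)^{+})$ and $\Omega^s_Y(\sum E_j-H-(\lambda)^{+})$ across the projections gives exactly the sheaf in the statement, and $\psi=pr_X^*\omega\wedge pr_Y^*\chi$ is a global section. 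For (b) and (c), fix the data $\mathbf{x}_P,\mathbf{f}_P,a_P$ at $P$ and $\mathbf{y}_Q,\mathbf{g}_Q,a_Q$ at $Q$, and let $a=\max(a_P,a_Q)$. Then $\{pr_X^*\mathbf{x}_P,\,pr_Y^*\mathbf{y}_Q\}$ is a regular system of parameters at $P\times Q$, and $\{pr_X^*\mathbf{f}_P,\,pr_Y^*\mathbf{g}_Q\}$ is a system of parameters whose vanishing cuts out $\mathcal{D}\times Y\cup X\times\mathcal{E}$ locally. Writing the pullbacks of $x_{Pi}^a$ in terms only of the $pr_X^*f_{Pk}$ (and the pulled back $y_{Qj}^a$ only in terms of $pr_Y^*g_{Q\ell}$) makes the matrix $[r_{ij}]$ block-diagonal, so
\begin{equation*}
R_{P\times Q}=R_P(\mathbf{x}_P,\mathbf{f}_P,a)\cdot R_Q(\mathbf{y}_Q,\mathbf{g}_Q,a).
\end{equation*}
Multiplying the rectifying congruences from $X$ and $Y$ and reducing modulo $(\mathbf{x}_P^a,\mathbf{y}_Q^a)$ yields the required congruence for $\mu$ with leading coefficient $c_Pc_Q$, which is nonzero when both $\theta$ and $\lambda$ are strictly rectifying; at points of $(\cap D_i)\times(\cap E_j)\setminus\mathcal{P}\times\mathcal{Q}$ at least one factor already lies in the relevant ideal, so the product vanishes. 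This proves (b). For (c), expand $pr_X^*(f_1\cdots f_r\omega)\wedge pr_Y^*(g_1\cdots g_s\chi)$ as a power series in $\mathbf{x}_P\cup\mathbf{y}_Q$: the coefficient of $\mathbf{x}_P^{\mathbf{a}-1}\mathbf{y}_Q^{\mathbf{a}-1}$ factors as the product of the two separate top-degree coefficients, giving the multiplicativity of the residue symbol.

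For (d), the inclusion $\supseteq$ is immediate from (c), since every pure tensor $v\otimes w$ with $v\in C_{\Omega}(\mathcal{D},\mathcal{P},\theta,G)$ and $w\in C_{\Omega}(\mathcal{E},\mathcal{Q},\lambda,H)$ arises as $Res(pr_X^*\omega\wedge pr_Y^*\chi)$ for an appropriate $\omega,\chi$. For the reverse inclusion, one uses the K\"unneth formula for global sections of an external tensor product of coherent sheaves on projective varieties, applied to the two invertible sheaves of (a); this identifies $H^0$ of the product sheaf with the tensor product of the individual $H^0$'s, so every global section of $\Omega^{r+s}(\sum(D_i\times Y)+\sum(X\times E_j)-G\times Y-X\times H-(\mu)^{+})$ is a $k$-linear combination of forms $pr_X^*\omega\wedge pr_Y^*\chi$, and applying the residue map coordinate-wise and using (c) yields exactly $C_{\Omega}(\mathcal{D},\mathcal{P},\theta,G)\otimes C_{\Omega}(\mathcal{E},\mathcal{Q},\lambda,H)$. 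The main obstacle I anticipate is bookkeeping in (a), namely verifying that $(\mu)^{+}$ splits as a sum of pullbacks so that the relevant sheaf is literally an external tensor product; once this is in hand, both (b) and (c) reduce to the block-diagonal observation and (d) is a clean application of K\"unneth.
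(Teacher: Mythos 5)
Your proof follows essentially the same route as the paper's: the block-diagonal form of the matrix defining $R_{P\times Q}$ at a product point, multiplicativity of the rectifying congruences, factorization of power series coefficients in disjoint variables, and a tensor-product identification of global sections to get (d). Two small remarks are worth making. In your sketch of part (c), you expand $pr_X^*(f_1\cdots f_r\omega)\wedge pr_Y^*(g_1\cdots g_s\chi)$ and take the coefficient of $\mathbf{x}_P^{\mathbf{a}-1}\mathbf{y}_Q^{\mathbf{a}-1}$, but by the definition of the residue symbol one must first multiply by $R_{P\times Q}$ and also account for the factors $\theta,\lambda$ hiding in $\mu$; this costs nothing because the factorization $R_{P\times Q}=pr_X^*(R_P)\,pr_Y^*(R_Q)$ you establish in (b), together with $\mu=pr_X^*(\theta)pr_Y^*(\lambda)$, keeps everything block-separable, but the omission should be fixed. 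In part (d) you are in fact more explicit than the paper, which disposes of (d) with the phrase ``follows from (c) by the definition of the Kronecker product of matrices''; that shorthand silently assumes that $\Omega^{r+s}(\sum D_i\times Y+\sum X\times E_j-G\times Y-X\times H-(\mu)^+)$ is spanned by the forms $pr_X^*\omega\wedge pr_Y^*\chi$, and your invocation of the K\"unneth theorem for $H^0$ of an external tensor product $\mathcal F\boxtimes\mathcal G$ on the product of proper $k$-varieties is exactly the justification needed.
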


\begin{proof} a). Pick any $P\in X$ and $Q\in Y$. Let $\mathbf{t} = \{t_1,\dots,t_r\}$ be local parameters at $P$ and $\mathbf{u} = \{u_1,\dots,u_s\}$ be local parameters at $Q$. Then $\{pr_X^*(t_1),\dots,pr_X^*(t_r), pr_Y^*(u_1),\dots,pr_Y^*(u_s)\}$
is a set of local parameters at $P\times Q.$
Then in some neighborhood $U_P$ of $P$, $\omega$ is represented by $f_P d\mathbf{t}$ for some $f_P\in k(X)$, and likewise, in some neighborhood $V_Q$ of $Q$, $\chi$ is represented by $g_Q d\mathbf{u}$ for some $g_Q\in k(Y).$ 
Note that on $U_P$ and $V_Q$ we have $(\omega) = (f_P)$ and $(\chi) = (g_Q),$ respectively.
Now the claim follows from the fact that 
on $U_P\times V_Q,$ $\psi$ is represented by 
$f_Pg_Qd\mathbf{t} \wedge d\mathbf{u}$, which on $U_P\times V_Q$
has divisor $(f_P)\times V_Q+U_P\times (g_Q),$ and as $P$ and $Q$ vary,  $\{U_P \times V_Q\}$ is a cover of  $X \times Y$. 



b) Let $P$, $Q$, $\mathbf{t}$ and $\mathbf{u}$ be as in (a).
Let $\mathbf{f} = \{f_1, f_2, \dots, f_r\}$  and $\mathbf{g} = \{g_1, g_2, \dots, g_s\}$ denote local equations for $\mathcal{D}$ and $\mathcal{E}$ at $P$ and $Q$ respectively, so $pr_X^*(f_i)$ and $pr_Y^*(g_j)$ are respectively local equations for $D_i\times Y$ and $X\times E_j.$
Let $a>0$ be such that both 
$m_P^a\subseteq (f_1,\dots,f_r),$ 
and $m_Q^a\subseteq (g_1,\dots,g_s)$ 
so we can write
\begin{align*}
    t_i^a &= \sum_k r_{ik}f_k, \;\;\;\;\;\; r_{ik} \in \mathcal{O}_P,\\
    u_j^a &= \sum_{l} s_{jl}g_l, \;\;\;\;\;\; s_{jl} \in \mathcal{O}_Q.
\end{align*}
Then applying $pr_X^*$ to the first set of equations and $pr_Y^*$ to the second set gives
\begin{align}\label{eq:prodmult}
    R_{P\times Q}({pr_X^*(\mathbf t)}\cup {pr_Y^*(\mathbf u)},pr_X^*(\mathbf f)\cup {pr_Y^*(\mathbf g)}, a) &= pr_X^*(det[r_{ik}])pr_Y^*(det[s_{jl}])\\ \notag
    &=pr_X^*(R_{P}(\mathbf{t}, \mathbf{f}, a))pr_Y^*(R_{Q}(\mathbf{u},\mathbf{g}, a)).
\end{align}

Since $\theta$ and $\lambda$ 
are rectifying functions, there are $c_P, c_Q\in k$ such that
\begin{align}\label{eq:exp1}
R_{P}(\mathbf{t}, \mathbf{f}, a) \cdot \theta \equiv c_Pt_{1}^{a-1}\dots t_{r}^{a-1} \pmod{t_{1}^{a},\dots,t_{r}^{a}}, \end{align}
\begin{align}\label{eq:exp2}
R_{Q}(\mathbf{u}, \mathbf{g}, a) \cdot \lambda &\equiv c_Qu_{1}^{a-1}\dots u_{s}^{a-1} \pmod{u_{1}^{a},\dots,u_{s}^{a}}.
\end{align}
Applying $pr_X^*$ and $pr_Y^*$ respectively to the relations in (\ref{eq:exp1}) and (\ref{eq:exp2}), and then multiplying (\ref{eq:prodmult}) by $\mu$
shows that the
 first term in the expansion of  
$$R_{P\times Q}({pr_X^*(\mathbf t)}\cup {pr_Y^*(\mathbf u)},pr_X^*(\mathbf f)\cup {pr_Y^*(\mathbf g)}, a)\cdot \mu$$ is
$$c_{P\times Q} pr^*_X(t_1)^{a-1}\dots pr^*_X(t_r)^{a-1} pr^*_Y(u_1)^{a-1}\dots pr^*_Y(u_s)^{a-1},$$
where
\begin{align}\label{prodofcoeff}
c_{P\times Q}:=c_Pc_Q,
\end{align}
so $\mu$ is $(\mathcal D\times Y\cup X\times \mathcal E, \mathcal P\times \mathcal Q)$-rectifying, 
and if $\theta$ and $\lambda$ are strictly rectifying, so is $\mu$.


c) With the notation as in (a) and (b), 
let 
\begin{align}
v=f_1\dots f_r\omega/\theta d\mathbf{t},~ w=g_1\dots g_s\chi/\lambda d\mathbf{u}.
\end{align}
By (\ref{rm:niceresformula}), we have
$$ Res_{P} \begin{bmatrix} \omega\\ {D_1, \dots, D_r} \end{bmatrix} Res_{Q} \begin{bmatrix} \chi\\ {E_1, \dots, E_s} \end{bmatrix}=c_P v(P)c_Q w(Q)=c_{P\times Q}v(P)w(Q),$$
by (\ref{prodofcoeff}).
On the other hand, using (\ref{rm:niceresformula}) again, we have

$$Res_{P\times Q} \begin{bmatrix} \psi \\ {D_1\times Y,\dots, D_r\times Y, X\times E_1,\dots,X\times E_s} \end{bmatrix}= c_{P\times Q}z(P\times Q),$$
where
$$z=pr_X^*(f_1)\dots pr_X^*(f_r) pr_Y^*(g_1)\dots pr_Y^*(g_s)\psi/\mu d (pr_X^*(\mathbf{t}))\wedge d (pr_Y^*(\mathbf{u})).$$
Using the definition of $\psi$ and that the wedge product is bilinear, we have
$$\psi/d (pr_X^*(\mathbf{t}))\wedge d (pr_Y^*(\mathbf{u}))=pr^*_X(\omega/d\mathbf{t}))pr^*_Y(\chi/d\mathbf{u})),$$
so $z=pr_X^*(v)pr_Y^*(w)$. Hence $z(P\times Q)=v(P)w(Q),$ which gives the result.

Finally (d) follows from (c) by the definition of the Kronecker product of matrices.
\end{proof}

\subsection{Orthogonality}
Now we show that rectified differential codes are orthogonal to their corresponding functional codes with respect to the standard dot product on $k^n$.
\begin{thm}\label{thm:duality}
Differential codes are   contained in the dual of functional codes, i.e.,
if $\theta$ is $(\mathcal D,\mathcal P)$-rectifying, then
\begin{align*}
    C_{\Omega}(\mathcal{D},\mathcal{P}, \theta, G) \subseteq C_L(\mathcal{P}, G)^{\perp}.
\end{align*}
\end{thm}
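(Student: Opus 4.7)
The plan is to deduce the orthogonality directly from the Residue Theorem (Theorem~\ref{thm:residuethm}) combined with the linearity established in Proposition~\ref{prop:linearity}. This is the natural generalization of the classical argument on curves, now available on $X$ precisely because of the rectifying condition on $\theta$.

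First I would fix arbitrary $f\in L(G)$ and $\omega\in\Omega^r(D-G-(\theta)^+)$. From $(f)\geq -G$ and $(\omega)\geq -D+G+(\theta)^+$ we obtain $(f\omega)\geq -D+(\theta)^+$, so that $f\omega\in\Omega^r(D-(\theta)^+)\subseteq\Omega^r(D)$. The Residue Theorem applied to $f\omega$ relative to $(D_1,\ldots,D_r)$ then yields
\[
\sum_{P\in\cap_l D_l}Res_P\begin{bmatrix}f\omega\\D_1,\ldots,D_r\end{bmatrix}=0.
\]
I would split this sum along the partition $\cap_l D_l=\mathcal P\sqcup(\cap_l D_l-\mathcal P)$. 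At each $P_i\in\mathcal P$, the assumption $Supp(G)\cap\mathcal P=\emptyset$ makes $f$ regular at $P_i$, so Proposition~\ref{prop:linearity} supplies the key identity
\[
Res_{P_i}\begin{bmatrix}f\omega\\D_1,\ldots,D_r\end{bmatrix}=f(P_i)\,Res_{P_i}\begin{bmatrix}\omega\\D_1,\ldots,D_r\end{bmatrix}.
\]

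The one delicate step I anticipate is handling $P\in(\cap_l D_l)-\mathcal P$: there $f$ can have a pole if $P\in Supp(G)$, so one cannot simply apply linearity and then quote the vanishing clause of Proposition~\ref{prop:linearity}, since the linearity hypothesis requires $f$ to be regular at $P$. My plan is to sidestep this by invoking the vanishing clause of Proposition~\ref{prop:linearity} directly on the form $f\omega$ itself, viewed as lying in $\Omega^r(D-(\theta)^+)=\Omega^r(D-0-(\theta)^+)$ (i.e.\ reading the Proposition with the role of ``$G$'' played by the zero divisor). Since that vanishing assertion depends only on condition (2) of the rectifying definition and makes no reference to any auxiliary regular function, it forces the residue of $f\omega$ at every such $P$ to vanish, regardless of the behaviour of $f$.

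Putting these pieces together, the residue sum from the Residue Theorem collapses to
\[
\sum_{P_i\in\mathcal P}f(P_i)\,Res_{P_i}\begin{bmatrix}\omega\\D_1,\ldots,D_r\end{bmatrix}=0,
\]
which is precisely the standard dot product on $k^n$ of $Ev_{(\mathcal P,G)}(f)$ with $Res_{(\mathcal D,\mathcal P,\theta,G)}(\omega)$. As $f\in L(G)$ and $\omega\in\Omega^r(D-G-(\theta)^+)$ were arbitrary, this will establish $C_\Omega(\mathcal D,\mathcal P,\theta,G)\subseteq C_L(\mathcal P,G)^\perp$.
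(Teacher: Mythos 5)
Your proposal is correct and follows essentially the same route as the paper: apply the Residue Theorem to $f\omega$, split the sum over $\cap_l D_l$ into $\mathcal P$ and its complement, and invoke Proposition~\ref{prop:linearity}. You are in fact more careful than the paper at the one delicate step — the paper cites Proposition~\ref{prop:linearity} to conclude that the residue of $f\omega$ vanishes at $P\in\cap_lD_l-\mathcal P$ without addressing that $f$ may have a pole at such a $P$ (only $Supp(G)\cap\mathcal P=\emptyset$ is assumed, so the hypothesis of the linearity clause can fail there), whereas your device of observing $f\omega\in\Omega^r(D-(\theta)^+)$ and applying the vanishing clause of the Proposition directly to $f\omega$, reading its auxiliary divisor as $0$, cleanly closes this gap.
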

\begin{proof}
Let $f \in L(G)$ and $\omega \in \Omega^r(\sum_i D_i - G - (\theta)^{+})$.
Then by Proposition \ref{prop:linearity}, for every $P_i \in \cap_iD_i$

\begin{align*}
Res_{P_i} \begin{bmatrix} f \omega \\ {D_1, D_2, \dots, D_r} \end{bmatrix}&= f(P_i) Res_{P_i} \begin{bmatrix} \omega\\ {D_1, D_2, \dots, D_r} \end{bmatrix},
\end{align*} 
and
\begin{align*}
Res_{P_i} \begin{bmatrix} \omega \\ {D_1, D_2, \dots, D_r} \end{bmatrix}&= 0 \;\;\;\; \text{if} \; P_i \in \cap_i D_i - \mathcal{P}.
\end{align*}

Applying the Residue Theorem (Theorem \ref{thm:residuethm}), we get
\begin{align*}
   \sum_{P_i \in \cap_i D_i} Res_{P_i} \begin{bmatrix} f\omega\\ {D_1, D_2, \dots, D_r} \end{bmatrix} =  \sum_{P_i \in \mathcal{P}}f(P_i) Res_{P_i} \begin{bmatrix} \omega\\ {D_1, D_2, \dots, D_r} \end{bmatrix} =0.
\end{align*}
The required orthogonality now follows.
\end{proof}
 
Let us now revisit Examples \ref{ex:subset} and \ref{ex:nontrans} and see how $(\mathcal{D}, \mathcal{P})$-rectifying functions rectify the orthogonality issue that shows up  when the intersection  $\cap_i D_i$ is not transversal or when $\mathcal{P}$ is a proper subset of $\cap_i D_i$.

\begin{example} \normalfont
We will first show how using a $(\mathcal{D}, \mathcal{P})$-rectifying function
fixes the difficulty encountered in Example \ref{ex:subset}.
We use the notation of that example.

Recall that the intersection $D_1 \cap D_2$ is transversal at every point of $\mathcal{P}$, but that $P_4\notin \mathcal P_0$.   Note that $\theta^s = \frac{Z -(\alpha+1)Y}{Y+Z}$ is a strictly $(\mathcal{D}, \mathcal{P}_0)$-rectifying function that doesn't vanish at $P_1$, $P_2,$ and $P_3$ but vanishes at $P_4$. In particular, for any $\omega\in \Omega^2(D_1+D_2 - G - (\theta^s)^{+}) $, 
\begin{align*}
Res_{P_4} \begin{bmatrix} \omega\\ {D_1, D_2, \dots, D_r} \end{bmatrix} =0
\end{align*}
Then one can check that $\Omega^2(D_1+D_2 - G - (\theta^s)^{+}) = span \{(\alpha+1) \omega_1 - \alpha \omega_3\}$. 
Hence the image under the residue map on $\Omega^2(D_1+D_2 - G - (\theta^s)^{+})$  yields the code $C_{\Omega}(\mathcal{D},\mathcal{P}_0, \theta^s, G)$ spanned by $(\alpha+1, \alpha, 1),$ which is contained in (in fact equal to) the dual of $C_L(\mathcal{P}_0, G)$. 
\end{example}

\begin{example} \normalfont
We will now show how using $(\mathcal{D}, \mathcal{P})$-rectifying functions
fixes the difficulty encountered in Example \ref{ex:nontrans}.
We use the notation of that example.

Recall that  the intersection of $D_1$ and $D_2$ at $P_5$ is not transversal.  One can check that since $\theta_1 = \frac{Z}{Y+Z} $ and  $\theta_2^s = \frac{X}{Y+Z}$ do not vanish at $P_1, P_2, P_3,$ and $P_4$ but do vanish at $P_5$, they are $(\mathcal{D}, \mathcal{P})$-rectifying
functions. Furthermore, when multiplied by $R_{P_5}(\{s,t\},\mathbf g,2)={{-t}\over{t-1}},$ the leading terms in the expansions of $\theta_1={t\over{1+t}}$, 
$\theta_2={s\over{1+t}}$ at $P_5$ are respectively
$t^2$ and $st$, so $\theta_1$ is rectifying but not strictly rectifying and $\theta_2^s$ is strictly rectifying. 

One computes that $\Omega^2(D_1+D_2 - G - (\theta_1)^{+}) = span \{ \omega_1 \}$ and   $\Omega^r(D_1+D_2 - G - (\theta_2^s)^{+}) = span \{ \omega_2 \}$. 
The respective  images under the residue map  are spanned by $(1,2, 2\alpha, \alpha, 0)$ and $(1,1,1,1,2)$. Therefore we get $C_L(\mathcal{P}, G)^{\perp} = C_{\Omega}(\mathcal{D},\mathcal{P}, \theta_1, G) + C_{\Omega}(\mathcal{D},\mathcal{P}, \theta^s_2, G) $, and neither code gives the whole dual.
\end{example}

In the last example we see that the dual of $C_L(\mathcal{P}, G)$ is the sum of two differential codes. In the next example, we will see that for $ \mathbb{P}^1 \times \mathbb{P}^1 \times \dots \times \mathbb{P}^1$ ($r$-times, for any $r\geq 2$) and a suitable $\mathcal{P}$ and $G$, one can use  arguments similar to \cite[\S 10.2]{alain} to  show that the dual of $C_L(\mathcal{P}, G)$ is equal to the sum of $r$  rectified differential codes that is not a strictly rectified (or equivalently not a functional) code on $\mathcal{P}$.

\begin{example} \label{ex:sum} \normalfont
Consider the variety  $ \mathbb{P}^1 \times \mathbb{P}^1 \times \dots \times \mathbb{P}^1$ ($r$-times) over $k=\mathbb{F}_q$ with homogeneous coordinates $[X_i:Z_i]$ for each $\mathbb{P}^1$ factor.  Let $U$ be the affine chart $\cap_i \{Z_i \neq 0\}$ with coordinates $x_i = X_i/Z_i$. Note that the Picard group of $\mathbb{P}^1 \times \mathbb{P}^1 \times \dots \times \mathbb{P}^1$ is generated by the classes of $E_i$ where $E_i=V(Z_i)$ (\cite[Exercise II.6.1, Corollary II.6.16]{hartshorne}). For any
$0\leq m_i \leq q-2$,  
set $ G_{\{m_i\}}= \sum_i m_i E_i$  and let $\mathcal{P}$ be all the $k$-rational points in $U$. Then $L( G_{\{m_i\}}) \simeq \otimes_i\mathbb{F}_q[x_i]_{\leq m_i},$ where $\mathbb{F}_q[x_i]_{\leq m_i}$ denotes polynomials in $x_i$ of degree at most $m_i$. Therefore  the functional construction yields the code
\begin{align}\label{eqn:func}
    C_L(\mathcal{P},  G_{\{m_i\}}) = \otimes_i RS_q(m_i +1),
\end{align}
where $RS_q(m_i +1)$ denotes the $(m_i +1)$-dimensional  Reed-Solomon code over $\mathbb{F}_q$. Note that the dual of $ C_L(\mathcal{P}, G)$ is (see e.g., \cite[Lemma D.1]{alain})

\begin{align*}
    C_L(\mathcal{P},  G_{\{m_i\}})^{\perp} = \sum_i \mathbb{F}_q^q \otimes \dots \otimes \underbrace{RS_q(q-m_i -1)}_ {i^{th} position}\otimes \dots \otimes   \mathbb{F}_q^q. 
\end{align*}
\indent Let us now construct  $C_L(\mathcal{P},  G_{\{m_i\}})^{\perp}$ using the differential construction. For any given $i$ between $1$ and $r$, consider the family of properly intersecting divisors $\mathcal{D}_i = \{D_{ij}\}_{j=1}^r,$ where $D_{ij} = (f_{ij})^+$ and
\begin{align*}
    f_{ij} &= \prod_{a \in \mathbb{F}_q} (x_j - a) \;\; \text{if} \; j \neq i,\\
    f_{ii} &= \prod_{a \in \mathbb{F}_q} (\sum_l x_l - a). 
\end{align*}
Note that $\mathcal{P}= \cap_jD_{ij}$. At any $\boldsymbol \alpha=(\alpha_1,\dots,\alpha_r)\in \mathcal P$, the local parameters are ${\bf x}-\boldsymbol \alpha=\{x_l-\alpha_l,$ $1\leq l\leq r\}$. 
Using the fact that the product of the non-zero elements of a finite field is $-1$, we deduce that in the local ring $\mathcal{O}_{\boldsymbol \alpha}$, 

\begin{align*}
    f_{ij} &=-(x_j-\alpha_j) \mod{m_{\boldsymbol \alpha}^2}, \text{~~~when~}j\neq i\\
    f_{ii} &= -\sum_{l}(x_l-\alpha_l) \mod{m_{\boldsymbol \alpha}^2}.
\end{align*}

For any $i$, set ${\bf f_i}=\{f_{ij}|1\leq j\leq r\}$. The above yields

\begin{align*}
R_{\boldsymbol \alpha}
({\bf f_i},{\bf x}-\boldsymbol \alpha,1) = (-1)^r \mod{m_{\boldsymbol \alpha}}.
\end{align*}

Hence we conclude that $R_{\boldsymbol \alpha}
({\bf f_i},{\bf x}-\boldsymbol \alpha,1)$ is invertible in $\mathcal{O}_{\boldsymbol{\alpha}}$, so ${\bf f_i}$  is also a regular set of parameters at $\alpha$ and 
the intersection of the divisors in $\mathcal{D}_i$ is transversal at every point in $\mathcal{P}$.

Therefore by Remark \ref{rmk:transrect}, each $C_{\Omega} (\mathcal{D}_i, \mathcal{P},  G_{\{m_i\}})$ is a (strictly) rectified differential code associated to $(\mathcal{P},  G_{\{m_i\}})$. Let
\begin{align*}
   \eta_i = \frac{(-1)^r d\mathbf{x}}{\prod_j f_{ij}}.
\end{align*}
Note that, $\eta_i$ satisfies the conditions for $\eta$ in Lemma \ref{lem:eta}, with
$\mathcal D_i$ playing the role of $\mathcal D$, $U$ playing the role of $V$, and $\theta^s=1$.
Computations yield  $(d{\mathbf x}) =
-2(\sum_l E_l)$,  $(f_{ij})=D_{ij}-qE_j$ for $j\neq i$ and $(f_{ii})=D_{ii}-q\sum_j E_j$. Therefore,

\begin{align*}
    (\eta_i) &= -2(\sum_j E_j) + 2q\sum_{j\neq i} E_j + qE_i - \sum_j D_{ij}\\
    &= \sum_{j\neq i}(2q -2)E_j + (q-2)E_i - \sum_j D_{ij}.
\end{align*}
Now by Theorem \ref{thm:df},
\begin{align*}
C_{\Omega}(\mathcal{D}_i,\mathcal{P},  G_{\{m_i\}}) &= C_L(\mathcal{P}, \sum_j D_{ij} -  G_{\{m_i\}} + (\eta_i) )\\
&= C_L(\mathcal{P}, \sum_{j\neq i}(2q -2)E_j + (q-2)E_i - \sum_j m_j E_j  )\\
&= C_L(\mathcal{P}, \sum_{j\neq i}(2q -m_j -2)E_j + (q-m_i-2)E_i)
\end{align*}

\begin{align} \label{eqn:example}
    \implies C_{\Omega}(\mathcal{D}_i,\mathcal{P},  G_{\{m_i\}}) &= \mathbb{F}_q^q \otimes \dots \otimes \underbrace{RS_q(q-m_i -1)}_ {i^{th} position}\otimes \dots \otimes   \mathbb{F}_q^q, 
\end{align}
since for $m_j \leq q-2$, $RS_q(2q-m_j -1)=\mathbb F_q^q$ for each $j\neq i$.

Therefore 
\begin{align*}
   C_L(\mathcal{P},  G_{\{m_i\}})^{\perp}  = \sum_{i=1}^r C_{\Omega}(\mathcal{D}_i,\mathcal{P},  G_{\{m_i\}}).
\end{align*}

\begin{prop}
The dual $C_L(\mathcal{P}, G_{\{m_i\}})^{\perp}$ above is not a strictly rectified differential code, or equivalently, it is not a functional code over $\mathcal{P}$.
\begin{proof}
 Since by Theorems \ref{thm:df} and \ref{thm:fd} strictly rectified differential codes are  functional and vice-versa, it suffices to show that $C_L(\mathcal{P}, G_{\{m_i\}})^{\perp}$ is not a functional code.  Any functional code on $\mathbb{P}^1 \times \mathbb{P}^1 \times \dots \times \mathbb{P}^1$ evaluated at $\mathcal{P}$ is  obtained by scaling the coordinates of a code of the form (\ref{eqn:func}) by non-zero scalars. This is because for $X_r =\mathbb{P}^1 \times \mathbb{P}^1 \times \dots \times \mathbb{P}^1 $($r$-times),  $Pic(X_r) = \mathbb{Z}^r$ and for any divisor $G$ on $X_r$, $G =  G_{\{m_i\}} + (g)$ for some ${\{m_i\}}$ and  $g \in k(X)$.  Therefore  $C_L(\mathcal{P}, G)$ (where  $G$ is assumed to be  disjoint from $\mathcal{P}$)  is equivalent to  $C_L(\mathcal{P},  G_{\{m_i\}}),$  obtained by scaling the coordinates by a non-zero scalar via the map $L(G) \rightarrow L( G_{\{m_i\}})$ given by  $f \mapsto fg$. (Note that  the condition $G$ is disjoint from $\mathcal{P}$ ensures that $g$ is regular and non-vanishing at  every $P \in \mathcal{P}$).
 
 The claim now follows from the following lemma, which is a generalization of  \cite[Lemma D.2]{alain}.
\end{proof}
\end{prop}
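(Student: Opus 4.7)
The plan is to prove the proposition by continuing along the lines indicated in the excerpt, reducing it to a clean combinatorial lemma about codes in $\mathbb{F}_q^{q^r}$, and then proving that lemma by the Schur-square method supplemented by induction on $r$. The reduction, which largely follows the author's opening: by Theorems \ref{thm:df} and \ref{thm:fd} it suffices to show $C_L(\mathcal{P}, G_{\{m_i\}})^{\perp}$ is not a functional code on $\mathcal{P}$. Since $\mathrm{Pic}((\mathbb{P}^1)^r) = \mathbb{Z}^r$, every divisor $G'$ on $X = (\mathbb{P}^1)^r$ is linearly equivalent to some $G_{\{n_i\}}$, and a function $g \in k(X)^*$ realizing the equivalence yields an isomorphism $L(G') \to L(G_{\{n_i\}})$ under which evaluation at $P_j$ differs by the nonzero scalar $g(P_j)$. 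So every functional code is a diagonal rescaling of some $\bigotimes_i RS_q(n_i+1)$, and the task becomes proving the following lemma: the L-shaped sum $\sum_{i=1}^r \mathbb{F}_q^q \otimes \cdots \otimes RS_q(q-m_i-1) \otimes \cdots \otimes \mathbb{F}_q^q$ is never equal, up to coordinate-wise rescaling by nonzero scalars, to any $\bigotimes_{i=1}^r RS_q(n_i+1)$ with $0 \leq n_i \leq q-1$.

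The central idea is to isolate a rescaling-invariant structural quantity that distinguishes the ``L-shape'' from the ``rectangle''. The natural candidate is the Schur (component-wise) square $C \star C$: under diagonal rescaling $C \mapsto \mathrm{Diag}(\lambda)\, C$, one has $C \star C \mapsto \mathrm{Diag}(\lambda^2)(C \star C)$, so $\dim(C \star C)$ is a rescaling invariant. For a tensor product of RS codes one computes $\bigl(\bigotimes_i RS_q(n_i+1)\bigr) \star \bigl(\bigotimes_i RS_q(n_i+1)\bigr) = \bigotimes_i RS_q(\min(2n_i+1, q))$, of dimension $\prod_i \min(2n_i+1, q)$. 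For the L-shape, the Schur square contains not only the diagonal contributions $RS_q(2(q-m_i-1)-1)$ in slot $i$ but also cross-slot products $RS_q(q-m_i-1) \otimes RS_q(q-m_j-1)$ for $i \neq j$, making its dimension strictly larger than what any tensor product compatible with the dimension-matching equality $q^r - \prod(m_i+1) = \prod(n_i+1)$ can achieve. The condition $m_i \leq q-3$ (from $m_i < q-2$) ensures each $RS_q(q-m_i-1)$ is a \emph{proper} subspace of $\mathbb{F}_q^q$, so all summands genuinely contribute to the Schur-square growth.

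The hardest part will be carrying the Schur-square dimension comparison through cleanly for arbitrary $r$ and arbitrary admissible $\{n_i\}$, where the rectangular side's dimension can vary widely as the $n_i$ range over $\{0, \ldots, q-1\}$. The plan to manage this is to combine the Schur-square inequality with an induction on $r$, using Couvreur's original \cite[Lemma D.2]{alain} as the base case $r = 2$. For the inductive step, the idea is to project onto any single $\mathbb{P}^1$ factor and restrict to a fiber: the L-shape restricts to an $(r-1)$-dimensional L-shape plus a constant part, while a (rescaled) tensor product restricts to a rescaled tensor product. If the global equality held, the restricted equality would violate the inductive hypothesis, unless the fibered restriction collapses, which can only happen when some $n_i \in \{0, q-1\}$ — boundary cases that are easily handled separately by direct dimension arguments. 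The inductive bookkeeping through rescalings (observing that the restriction of a global diagonal rescaling to a fiber is itself a diagonal rescaling on that fiber) is routine, and the combination of Schur-square obstruction with fibered induction gives the required contradiction, proving the lemma and hence the proposition.
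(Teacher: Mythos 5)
Your reduction of the proposition to a combinatorial lemma about subspaces of $V_1\otimes\cdots\otimes V_r$ modulo coordinate rescaling is exactly the one the paper uses, and the observation that $\dim(C\star C)$ is a rescaling invariant is correct. But from there the paper takes a much shorter, purely elementary route, and your proposed route has two concrete gaps.

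First, the Schur-square obstruction is weaker than you claim. The Schur square of the L-shaped code $L=\sum_i \mathbb F_q^q\otimes\cdots\otimes RS_q(q-m_i-1)\otimes\cdots\otimes\mathbb F_q^q$ is not merely ``strictly larger'' than some tensor Schur square: it is the \emph{entire} space $\mathbb F_q^{q^r}$. Indeed, for $i\ne j$, the cross term $L_i\star L_j$ already contains $(\mathbb F_q^q\star RS_q(\cdot))\otimes\cdots\otimes(RS_q(\cdot)\star\mathbb F_q^q)\otimes\cdots=\mathbb F_q^{q^r}$, since $RS_q(a)\star\mathbb F_q^q=\mathbb F_q^q$ whenever $a\ge 1$. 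Consequently the Schur-square comparison rules out a rescaled $\bigotimes_i RS_q(n_i+1)$ only when some $n_i<(q-1)/2$; when every $n_i\ge(q-1)/2$ both Schur squares are all of $\mathbb F_q^{q^r}$ and the invariant gives no information.

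Second, the fibered-induction step does not behave as you describe. Puncture $L$ to a fiber of $pr_1$, say $x_1=\alpha$: the first summand $RS_q(q-m_1-1)\otimes(\mathbb F_q^q)^{\otimes(r-1)}$ alone punctures surjectively to $(\mathbb F_q^q)^{\otimes(r-1)}$ (set $f_1$ to the constant $1$), so the restriction of $L$ is the whole ambient space, not an $(r-1)$-dimensional L-shape. So the claimed inductive contradiction does not materialize. (Shortening rather than puncturing has the dual problem: the shortened first summand vanishes, but the remaining summands do not reproduce the L-shape either.)

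The paper avoids all of this with a direct argument for the lemma. Suppose $\sum_i V_1\otimes\cdots\otimes U_i\otimes\cdots\otimes V_r=\phi(W_1\otimes\cdots\otimes W_r)$ with $\phi$ an invertible coordinate rescaling. Since the left side is a proper subspace, some $W_i\subsetneq V_i$; say $W_1\subsetneq V_1$, and choose a standard basis vector $e_{1j}\notin W_1$. Pick any nonzero $\mathbf p$ with $e_{1j}\otimes\mathbf p$ in the left-hand side (e.g.\ $\mathbf p=u_2\otimes v_3\otimes\cdots\otimes v_r$ with $0\ne u_2\in U_2$; this uses $r\ge 2$). Because $\phi$ rescales the product basis, $\phi^{-1}(e_{1j}\otimes\mathbf p)=e_{1j}\otimes\psi^{-1}(\mathbf p)$ for some rescaling $\psi$ of $V_2\otimes\cdots\otimes V_r$; that element still has first-factor support on $e_{1j}$, so it cannot lie in $W_1\otimes\cdots\otimes W_r$. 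This contradiction proves the lemma with no case analysis, no Schur products, and no induction, and it is the argument you should adopt.
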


\begin{lemma}
Set $r>1$ and for $1\leq i\leq r$, suppose $n_i\geq 2$. Let $V_i = \mathbb{F}_q^{n_i},$  with the standard basis $e_{i1},\dots,e_{in_i}$.
Let $0\neq U_i \subsetneq V_i$ be proper subspaces. Then the subspace
\begin{align*} 
    \sum_{i=1}^r V_1 \otimes \dots \otimes \underbrace{U_i}_ {i^{th} position} \otimes \dots \otimes   V_r 
\end{align*}
of $V_1 \otimes V_2 \otimes \dots \otimes V_r$
cannot be obtained by applying an invertible scaling function $\phi$ to the coordinates 
(with respect to the basis $\{e_{1j_1}\otimes\dots\otimes e_{rj_r}|1\leq j_l\leq n_l,1\leq l\leq r\} $)
of a subspace of the form $W_1 \otimes W_2 \otimes \dots \otimes W_r$.

\end{lemma}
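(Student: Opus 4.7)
My plan is to argue by contradiction using the observation that an invertible coordinate-diagonal map preserves the standard-basis support of every subspace, whereas a tensor product subspace and the ``slab sum'' $S$ have sharply different support structures.

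Suppose for contradiction that some invertible diagonal $\phi$ and subspaces $W_i \subseteq V_i$ satisfy
\[ \phi(W_1 \otimes \cdots \otimes W_r) = S := \sum_{i=1}^r V_1 \otimes \cdots \otimes U_i \otimes \cdots \otimes V_r. \]
For a subspace $C \subseteq V_1 \otimes \cdots \otimes V_r$ let $\operatorname{Supp}(C) \subseteq \prod_i [n_i]$ denote the set of standard-basis coordinates at which some element of $C$ is nonzero. I would first record three facts, each immediate from expansion in the standard basis: (i) $\operatorname{Supp}$ is preserved by invertible diagonal scaling; (ii) $\operatorname{Supp}(W_1 \otimes \cdots \otimes W_r) = \prod_i \operatorname{Supp}(W_i)$; (iii) $\operatorname{Supp}(S) = \{\alpha : \alpha_i \in \operatorname{Supp}(U_i)\text{ for some }i\}$. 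I would also observe that $S$ is a nonzero proper subspace (its annihilator is the nonzero tensor $U_1^\perp \otimes \cdots \otimes U_r^\perp$, while each summand of $S$ contributes nonzero pure tensors), forcing $W_i \ne 0$ for every $i$ and $W_{i_0} \subsetneq V_{i_0}$ for some $i_0$.

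The argument then splits into two cases. If $\operatorname{Supp}(W_k) \cap \operatorname{Supp}(U_k) = \emptyset$ for every $k$, then every $\alpha \in \prod_k \operatorname{Supp}(W_k)$ has $\alpha_k \notin \operatorname{Supp}(U_k)$ for all $k$, so $\alpha \notin \operatorname{Supp}(S)$. Thus $\operatorname{Supp}(\phi(W_1 \otimes \cdots \otimes W_r))$ and $\operatorname{Supp}(S)$ are disjoint but both nonempty, giving an immediate contradiction. Otherwise some $j^*$ satisfies $\operatorname{Supp}(W_{j^*}) \cap \operatorname{Supp}(U_{j^*}) \ne \emptyset$, and I would run a slicing (partial-evaluation) argument. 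After picking a direction $i^* \ne j^*$ with $W_{i^*} \subsetneq V_{i^*}$ --- possible in all cases, the only subtle situation being when $W_{j^*}$ is the unique proper factor, handled by taking $i^* = j^*$ and replacing $j^*$ by any other index $k$ (then $W_k = V_k$ makes $\operatorname{Supp}(W_k) \cap \operatorname{Supp}(U_k) = \operatorname{Supp}(U_k) \ne \emptyset$ automatically) --- fix $\alpha_{j^*} \in \operatorname{Supp}(W_{j^*}) \cap \operatorname{Supp}(U_{j^*})$ and $\alpha_j \in \operatorname{Supp}(W_j)$ for the remaining $j \ne i^*$. The slice map $V_1 \otimes \cdots \otimes V_r \to V_{i^*}$ evaluating non-$i^*$ coordinates at these $\alpha_j$'s carries $\phi(W_1 \otimes \cdots \otimes W_r)$ into a diagonal rescaling of $W_{i^*}$, which has dimension $\dim W_{i^*} < n_{i^*}$. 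On the other hand it carries the summand $V_1 \otimes \cdots \otimes U_{j^*} \otimes \cdots \otimes V_r$ of $S$ onto all of $V_{i^*}$, because $\alpha_{j^*} \in \operatorname{Supp}(U_{j^*})$ yields some $u_{j^*} \in U_{j^*}$ with $u_{j^*}(\alpha_{j^*}) \ne 0$ while the remaining factors range freely. Thus the slice of $S$ fills $V_{i^*}$ while the slice of $\phi(W_1 \otimes \cdots \otimes W_r)$ does not, the desired contradiction.

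The main obstacle is the bookkeeping around the choice of slicing direction in the second case, especially the edge case above; the support computations and the slicing step itself are otherwise routine.
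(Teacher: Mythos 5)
Your proof is correct, but it takes a genuinely different route from the paper's. The paper argues directly: since $S$ is a proper nonzero subspace of $V_1\otimes\cdots\otimes V_r$, some $W_i$ (say $W_1$) is proper; pick a standard basis vector $e_{1j}\notin W_1$; since $U_2\neq 0$ one can take $\mathbf{p}\in U_2\otimes V_3\otimes\cdots\otimes V_r$ nonzero, so $e_{1j}\otimes\mathbf{p}\in S$; then because $\phi^{-1}$ is coordinatewise, $\phi^{-1}(e_{1j}\otimes\mathbf{p})=e_{1j}\otimes\psi^{-1}(\mathbf{p})$ for some invertible scaling $\psi$ on $V_2\otimes\cdots\otimes V_r$, and this pure tensor cannot lie in $W_1\otimes V_2\otimes\cdots\otimes V_r$ because $e_{1j}\notin W_1$. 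That is a single short step. You instead develop the algebra of standard-basis supports (that $\operatorname{Supp}$ is invariant under $\phi$, multiplicative on tensor products, and a union of slabs for $S$), split into a disjoint-support case and a slicing case, and in the second case do a partial-evaluation argument to compare dimensions of slices. Your version is heavier --- you need the case split, the choice of slicing direction $i^*$ distinct from $j^*$, and the edge case when the unique proper factor also carries the nonempty support intersection --- but it makes the structural contrast (product support versus union-of-slabs support) explicit and would generalize to situations where one cannot produce a convenient pure tensor directly. Both proofs are sound; the paper's is shorter and uses $U_i\neq 0$ only once (to get a nonzero $\mathbf{p}$), while yours uses the hypotheses more symmetrically across the $i$.
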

\begin{proof}
Suppose there exist such $W_1, W_2, \dots, W_r$ so that 
\begin{align}\label{eqn:tensor}
\sum_{i=1}^r V_1 \otimes \dots \otimes \underbrace{U_i}_ {i^{th} position} \otimes \dots \otimes   V_r = \phi(W_1 \otimes W_2 \otimes \dots \otimes W_r),
\end{align}
where $\phi$ is an invertible scaling function on the coordinates of $V_1 \otimes V_2 \otimes \dots \otimes V_r$ as given in the statement of the Lemma. Note that by the hypothesis on $U_i$,
\begin{align*}
    \sum_{i=1}^r V_1 \otimes \dots \otimes \underbrace{U_i}_ {i^{th} position} \otimes \dots \otimes   V_r \subsetneq V_1 \otimes \dots \otimes   V_r.
\end{align*}
Hence $W_i \subsetneq V_i$ for some $i$. Without loss of generality assume that $i=1$. Then there exists a standard basis vector ${e_{1j}}$
not contained in $W_1$.
Now for some non-zero $\mathbf{p} \in V_2 \otimes V_3\otimes \dots \otimes   V_r $,
$e_{1j} \otimes \mathbf{p}$ is in the left hand side of (\ref{eqn:tensor}).
The equality of (\ref{eqn:tensor}) implies that 

\begin{align*}
    {e_{1j}} \otimes \mathbf{p} &\in \phi(W_1 \otimes W_2 \otimes \dots \otimes W_r).
    \end{align*}
 Hence
    \begin{align*}
    \phi^{-1}( {e_{1j}} \otimes \mathbf{p}) &\in W_1 \otimes W_2 \otimes \dots \otimes W_r.
\end{align*}
Note that  
\begin{align*}
\phi^{-1}( {e_{1j}} \otimes \mathbf{p}) =e_{1j} \otimes \psi^{-1}(\mathbf{p})
\end{align*}
for some invertible scaling function $\psi$ of $V_2 \otimes \dots \otimes V_r$ 
with respect to the basis $\{e_{2j_1}\otimes\dots\otimes e_{rj_r}|1\leq j_l\leq n_l,2\leq l\leq r\} $ of $V_2 \otimes \dots \otimes V_r$.

Hence we get 
\begin{align*}
{e_{1j}} \otimes  \psi^{-1}(\mathbf{p}) \in W_1 \otimes W_2 \otimes \dots \otimes W_r
\end{align*}
but this is impossible since $e_{1j} \notin W_1$. 

\end{proof}
\end{example}

The above  example begs the question of whether the the dual of a functional code on an $r$-dimension variety is always the sum of  at most $r$ rectified differential codes. We note that the analogous question for $r=2$ (and $\mathcal P$ the intersection of transversal divisors) was asked and left open in \cite{alain}. 

\section{Acknowledgements}
We would like to thank the referee for a careful reading of a previous draft and helpful comments that greatly enhanced the quality of this manuscript.
\nocite*{}
\bibliographystyle{alpha}
\bibliography{ref_differential}

\begin{thebibliography}{HvLP98}

\bibitem[BG20]{newer1}
Peter Beelen and Sudhir~R. Ghorpade.
\newblock Hyperplane sections of determinantal varieties over finite fields and
  linear codes.
\newblock {\em Discrete Math.}, 343(9-111965), 2020.

\bibitem[Cou09]{alain}
Alain Couvreur.
\newblock Sums of residues on algebraic surfaces and application to coding
  theory.
\newblock {\em J. Pure Appl. Algebra}, 213(12):2201--2223, 2009.

\bibitem[Cou11]{alain2}
Alain Couvreur.
\newblock Differential approach for the study of duals of algebraic-geometric
  codes on surfaces.
\newblock {\em J. Th\'{e}or. Nombres Bordeaux}, 23(1):95--120, 2011.

\bibitem[Cou12]{alain3}
Alain Couvreur.
\newblock The dual minimum distance of arbitrary-dimensional
  algebraic-geometric codes.
\newblock {\em J. Algebra}, 350:84--107, 2012.

\bibitem[Eis95]{eisenbud}
David Eisenbud.
\newblock {\em Commutative algebra (With a view toward algebraic geometry)},
  volume 150 of {\em Graduate Texts in Mathematics}.
\newblock Springer-Verlag, New York, 1995.

\bibitem[Ful98]{fulton}
William Fulton.
\newblock {\em Intersection theory}, volume~2 of {\em Ergebnisse der Mathematik
  und ihrer Grenzgebiete. 3. Folge. A Series of Modern Surveys in Mathematics
  [Results in Mathematics and Related Areas. 3rd Series. A Series of Modern
  Surveys in Mathematics]}.
\newblock Springer-Verlag, Berlin, second edition, 1998.

\bibitem[Gro60]{grothendieck}
Alexander Grothendieck.
\newblock The cohomology theory of abstract algebraic varieties.
\newblock In {\em Proc. {I}nternat. {C}ongress {M}ath. ({E}dinburgh, 1958)},
  pages 103--118. Cambridge Univ. Press, New York, 1960.

\bibitem[Hal17]{newer2}
Safia Haloui.
\newblock Codes from {J}acobian surfaces.
\newblock In {\em Arithmetic, geometry, cryptography and coding theory}, volume
  686 of {\em Contemp. Math.}, pages 123--135. Amer. Math. Soc., Providence,
  RI, 2017.

\bibitem[Har66]{hartshorne_residues}
Robin Hartshorne.
\newblock {\em Residues and duality}.
\newblock Lecture notes of a seminar on the work of A. Grothendieck, given at
  Harvard 1963/64. With an appendix by P. Deligne. Lecture Notes in
  Mathematics, No. 20. Springer-Verlag, Berlin-New York, 1966.

\bibitem[Har77]{hartshorne}
Robin Hartshorne.
\newblock {\em Algebraic geometry}.
\newblock Springer-Verlag, New York-Heidelberg, 1977.
\newblock Graduate Texts in Mathematics, No. 52.

\bibitem[HL79]{lipman_elementary}
Glenn~W. Hopkins and Joseph Lipman.
\newblock An elementary theory of {G}rothendieck's residue symbol.
\newblock {\em C. R. Math. Rep. Acad. Sci. Canada}, 1(3):169--172, 1978/79.

\bibitem[Hop83]{hopkins_residue}
Glenn Hopkins.
\newblock An algebraic approach to {G}rothendieck's residue symbol.
\newblock {\em Trans. Amer. Math. Soc.}, 275(2):511--537, 1983.

\bibitem[HP95]{decoding}
T.~H{\o}holdt and R.~Pellikaan.
\newblock On the decoding of algebraic-geometric codes.
\newblock {\em IEEE Transactions on Information Theory}, 41(6):1589--1614,
  1995.

\bibitem[HvLP98]{hoholdt}
Tom H{\o}holdt, Jacobus~H. van Lint, and Ruud Pellikaan.
\newblock Algebraic geometry codes.
\newblock In {\em Handbook of coding theory, {V}ol. {I}, {II}}, pages 871--961.
  North-Holland, Amsterdam, 1998.

\bibitem[Lip84]{lipman_dualizing}
Joseph Lipman.
\newblock Dualizing sheaves, differentials and residues on algebraic varieties.
\newblock {\em Ast\'{e}risque}, (117):ii+138, 1984.

\bibitem[Lip11]{lipman_lecture}
Joseph Lipman.
\newblock Residues, duality, and the fundamental class of a scheme-map.
\newblock {\em Lecture at Algecom 4, Purdue Univ.}, 03 2011.

\bibitem[Lit09]{little}
John Little.
\newblock Algebraic geometry codes from higher-dimensional varieties.
\newblock In Carlos~Munera E.~Martinez-Moro and D.~Ruano, editors, {\em
  Advances in Algebraic Geometric Codes}. World Scientific, 2009.

\bibitem[Liu02]{liu}
Qing Liu.
\newblock {\em Algebraic geometry and arithmetic curves}, volume~6 of {\em
  Oxford Graduate Texts in Mathematics}.
\newblock Oxford University Press, Oxford, 2002.
\newblock Translated from the French by Reinie Ern\'{e}, Oxford Science
  Publications.

\bibitem[Mas05]{massman}
John~Douglas Massman, III.
\newblock {\em Applications of algebraic geometry to error-correcting codes}.
\newblock ProQuest LLC, Ann Arbor, MI, 2005.
\newblock Thesis (Ph.D.)--University of Colorado Boulder.

\bibitem[Poo04]{poonen}
Bjorn Poonen.
\newblock Bertini theorems over finite fields.
\newblock {\em Ann. of Math. (2)}, 160(3):1099--1127, 2004.

\bibitem[Sha94]{shafa}
Igor~R. Shafarevich.
\newblock {\em Basic algebraic geometry. 1}.
\newblock Springer-Verlag, Berlin, second edition, 1994.
\newblock Varieties in projective space, Translated from the 1988 Russian
  edition and with notes by Miles Reid.

\bibitem[Sop13]{newer3}
Ivan Soprunov.
\newblock Toric complete intersection codes.
\newblock {\em J. Symbolic Comput.}, 50:374--385, 2013.

\bibitem[{Sta}22]{stacks-project}
The {Stacks project authors}.
\newblock The stacks project.
\newblock \url{https://stacks.math.columbia.edu}, 2022.

\bibitem[Sti09]{stitch}
Henning Stichtenoth.
\newblock {\em Algebraic function fields and codes}, volume 254 of {\em
  Graduate Texts in Mathematics}.
\newblock Springer-Verlag, Berlin, second edition, 2009.

\bibitem[TV91]{tsfasman}
M.~A. Tsfasman and S.~G. Vl\u{a}du\c{t}.
\newblock {\em Algebraic-geometric codes}, volume~58 of {\em Mathematics and
  its Applications (Soviet Series)}.
\newblock Kluwer Academic Publishers Group, Dordrecht, 1991.
\newblock Translated from the Russian by the authors.

\bibitem[VM84]{lin_codes_modular_curves}
S.~G. Vl\u{a}du\c{t} and Yu.~I. Manin.
\newblock Linear codes and modular curves.
\newblock In {\em Current problems in mathematics, {V}ol. 25}, Itogi Nauki i
  Tekhniki, pages 209--257. Akad. Nauk SSSR, Vsesoyuz. Inst. Nauchn. i Tekhn.
  Inform., Moscow, 1984.

\bibitem[Wal12]{walker}
Judy Walker.
\newblock {\em Codes and curves}, volume~7 of {\em Student Mathematical
  Library/IAS/Park City Mathematical Subseries}.
\newblock American Mathematical Society, Providence, corrected edition, 2012.

\end{thebibliography}

\end{document}